\documentclass[11pt]{article}

\usepackage[a4paper,margin=2.5cm]{geometry}

\usepackage{amsmath,amssymb,amsfonts,amsthm,mathtools}

\usepackage{graphicx}
\usepackage{float}
\usepackage{booktabs}
\usepackage{multirow}
\usepackage{array}
\usepackage{graphicx}%
\usepackage{multirow}%
\usepackage{amsmath,amssymb,amsfonts}%
\usepackage{amsthm}%
\usepackage{mathrsfs}%
\usepackage[title]{appendix}%
\usepackage{xcolor}%
\usepackage{textcomp}%
\usepackage{manyfoot}%
\usepackage{booktabs}%
\usepackage{algorithm}%
\usepackage{algorithmicx}%
\usepackage{algpseudocode}%
\usepackage{listings}%
\usepackage{soul}
\usepackage{tikz}
\usetikzlibrary{arrows.meta}

\newcommand{\zdg}{\overrightarrow{\Gamma}}
\newcommand{\ezdg}{\overrightarrow{\Gamma}_{\!E}}

\newcommand{\adg}{\overrightarrow{\mathrm{AG}}}
\newcommand{\Annl}{\mathrm{Ann}_{l}}
\newcommand{\Annr}{\mathrm{Ann}_{r}}
\newcommand{\Ann}{\mathrm{Ann}}
\newcommand{\Nil}{\mathrm{Nil}}
\newcommand{\Diam}{\mathrm{Diam}}
\newcommand{\Girth}{\mathrm{Girth}}
\newcommand{\ZP}{\mathbb{Z}^{+}}
\newcommand{\kd}{\mathrm{kd}}
\usepackage{enumitem}
\usepackage{setspace}
\usepackage[hidelinks]{hyperref}

\newtheorem{theorem}{Theorem}[section]
\newtheorem{lemma}[theorem]{Lemma}
\newtheorem{proposition}[theorem]{Proposition}
\newtheorem{corollary}[theorem]{Corollary}

\theoremstyle{definition}
\newtheorem{definition}[theorem]{Definition}
\newtheorem{example}[theorem]{Example}
\newtheorem{remark}[theorem]{Remark}

\title{\Large\bfseries
Annihilator Digraphs and Extended Zero-Divisor Digraphs of Semigroups and Rings}

\author{
	Rasie Mekera$^{1}$,
	Defne Somer$^{2}$,
	Didem Ye\c{s}il$^{2}$	
}

\date{}

\begin{document}
	
	\maketitle
	
	\begin{center}
		\small
		$^{1}$Independent Researcher, Xanthi, Greece
		
		\vspace{0.4cm}
		
		$^{2}$Department of Mathematics,
		\c{C}anakkale Onsekiz Mart University,
		\c{C}anakkale, T\"urkiye
		
		\vspace{0.5cm}
		
		\texttt{rasiemekera@gmail.com}$^{*}$
		
		\texttt{defne.somer@comu.edu.tr}
		
		\texttt{dyesil@comu.edu.tr}
		
		\vspace{0.3cm}
		
		$^{*}$Corresponding author.
		
		\textbf{MSC 2020:} 05C20, 05C25, 20M10, 16N40
		
	\end{center}
	
	%------------------------------------------------
	
	\begin{abstract}
	Let $S$ be a semigroup with zero. This paper studies the zero-divisor digraph $\zdg(S)$ and the extended zero-divisor digraph $\ezdg(S)$, and introduces the annihilator digraph $\adg(S)$ via left and right annihilators. The diameter bound when every zero-divisor is nilpotent, and the sinks and the sources of $\ezdg(S)$ being identical to those of $\zdg(S)$ is demonstrated. The conditions in which $\zdg(S)=\ezdg(S)=\adg(S)$ holds are established, and the connectedness, diameter, girth, and vertex degrees of $\adg(S)$ are bounded when every zero-divisor is nilpotent or two-sided. The extended zero-divisor digraph is connected if and only if the zero-divisor digraph is connected, and it contains a directed cycle if and only if the zero-divisor digraph does. The knit degrees of $\zdg(S)$, $\ezdg(S)$, and $\adg(S)$ are computed. For a unital ring $R$, the equality $\ezdg(R)=\zdg(R)$ is characterized by nilpotency indices and one-sided annihilator conditions; it holds for the full matrix ring $M_{n}(F)$ over a field $F$ if and only if $n=2$, and $\adg(M_{n}(F))$ is connected and contains a directed cycle. Moreover, for an artinian noncommutative ring $R$, it was proved that $\zdg(R)$ is connected if and only if $\ezdg(R)$ is connected if and only if every one-sided identity element of $R$ is a two-sided identity of $R$.
	\end{abstract}
	
	\bigskip
	
	\noindent\textbf{Keywords:}
Zero-divisor digraph | extended zero-divisor digraph | annihilator digraph | semigroup | noncommutative ring
	
	\medskip
	
	\noindent\textbf{MSC 2020:}
	20M10, 05C25
	
\section{Introduction}\label{sec:intro}

In \cite{b88}, Beck introduced the zero-divisor graph of a commutative ring $R$ as a graph whose vertex set consisted of the zero divisors of $R$, including zero. In \cite{al99}, the authors excluded zero from the vertex set and introduced the zero-divisor graph of commutative rings in its most widely used form, in which two distinct vertices $x$ and $y$ are adjacent if and only if $xy=0$. This graph is simple and undirected.

For noncommutative rings, Redmond introduced the zero-divisor digraph $\zdg(R)$ in \cite{r02}, in which there is an arc $x\to y$ if and only if $xy=0$. Wu \cite{w05} investigated the connectivity of $\zdg(R)$ for Artinian rings and characterized its sinks and sources, while Akbari and Mohammadian \cite{am06} obtained further results on zero-divisor graphs of noncommutative rings.

In parallel, DeMeyer et al. carried the zero-divisor graph to semigroups and introduced the zero-divisor graph of a commutative semigroup with zero \cite{dms02}, and subsequently \cite{dd05} contributed additional research.
Wright \cite{w07} defined the zero-divisor digraph $\zdg(S)$ of an arbitrary semigroup $S$ with zero, proved that the directed distance from a left zero-divisor to a right zero-divisor is at most $3$, and characterized the connectedness of $\zdg(S)$ by the property that every zero-divisor of $S$ is two-sided. 

Another approach modifies the adjacency relation of the zero-divisor graph. Bennis et al. \cite{bmt16} introduced the extended zero-divisor graph $\overline{\Gamma}(R)$ of a commutative ring with identity, in which distinct vertices $x$ and $y$ are adjacent if and only if $x^{n}y^{m}=0$ for some $n,m\in\ZP$ with $x^{n}\neq 0$ and $y^{m}\neq 0$, and Alkhamaiseh \cite{a20} studied the extended zero-divisor graphs of the rings of Gaussian integers modulo $n$. 
Badawi \cite{b14} defined the annihilator graph $AG(R)$, whose adjacency is determined by the annihilators of $x$, $y$, and $xy$. Afkhami et al. \cite{aks15} extended the annihilator graph to commutative semigroups. In \cite{trans}, Pookpienlert et al. also considered a graph called the annihilator graph for partial transformation semigroups, in which two vertices are adjacent if and only if they possess a common nonzero two-sided annihilator.
Further variants replace the element zero by an ideal or a congruence. Redmond \cite{r03} introduced the ideal-based zero-divisor graph, Anderson and Lewis \cite{al16} developed a general theory of congruence-based zero-divisor graphs, and Spiroff and Wickham \cite{sw11} introduced the compressed zero-divisor graph. Anderson and McClurkin \cite{am20} unified these constructions for commutative rings by studying the extended zero-divisor graph, the annihilator graph, and their ideal-based, congruence-based, and compressed analogs in a single framework.

In \cite{syc}, the authors analyzed the extended zero-divisor digraph $\ezdg(S)$ of a noncommutative semigroup $S$ with zero, characterized the semigroups satisfying $\ezdg(S)=\zdg(S)$, and then explored the completeness, diameter, and girth of $\ezdg(S)$ by means of nilpotency indices and annihilator sets. Motivated by \cite{syc}, this paper investigates the results of \cite{am20} for digraphs of semigroups and rings in light of \cite{syc} and obtains new results on extended zero-divisor digraphs of both rings and semigroups. This paper further bounds the distance between nilpotent vertices of $\zdg(S)$, identifies the end vertices of $\ezdg(S)$, and demonstrates that its sinks and sources are identical with those of $\zdg(S)$. The annihilator digraph $\adg(S)$ was introduced and compared with $\zdg(S)$ and $\ezdg(S)$. Its connectivity, diameter, girth, and vertex degrees were bounded when every zero divisor was either nilpotent or a two-sided zero divisor, and these parameters were computed for full matrix rings. The knit degree introduced in \cite{akk11} was extended from commuting graphs to the zero-divisor digraphs. Finally, the conditions under which the three digraphs coincide were described.
For unital rings, the equality $\ezdg(R)=\zdg(R)$ is characterized in a form strictly simpler than its semigroup counterpart in \cite{syc}, and it is shown to hold for the full matrix ring $M_{n}(F)$ over a field $F$ exactly when $n=2$. Necessary and sufficient conditions for the connectedness of $\ezdg(R)$ are obtained for Artinian rings. 

%%%%%%%%%%%%%%%%%%%%%%%%%%%%%%%%%%%%%%%%%%%%%%%%%%%%%%%%%%%%%%%%%%%%%%%%%%%%%%%%
\section{Preliminaries}\label{sec:prelim}

Throughout this paper, $S$ denotes a semigroup with zero, and $R$ denotes a ring. For any subset $A$ of either $S$ or $R$, $A^* $ denotes $A \setminus \{0\}$.

An element $x\in S$ is a \textbf{left zero divisor} if $xy=0$, and a \textbf{right zero divisor} if $yx=0$ for some $y\in S^*$. The set of all left or right zero divisors of $S$ is denoted by $Z(S)$ \cite{w07}. The set of nilpotent elements of $S$ is denoted by $\Nil(S)$, and for $x\in\Nil(S)^*$, the \textbf{nilpotency index} $n_{x}$ is the least positive integer with $x^{n_{x}}=0$, that is $x^{n_{x}-1}\neq 0$. The set of idempotent elements of $S$ is denoted by $E(S)$. For $x\in S$, the sets $\Annl(x)=\{s\in S: sx=0\}$ and $\Annr(x)=\{s\in S: xs=0\}$ are called the \textbf{left} and \textbf{right annihilators of $x$}, respectively \cite{semiring}. Moreover, $\Ann(x)=\Annl(x)\cap\Annr(x)$ \cite{trans}. For $A\subseteq S$, the \textbf{radical} of $A$ is $\sqrt{A}=\{s\in S: s^{n}\in A, \text{ for some } n\in\ZP\}$ \cite{radical}. A nonempty subset $I\subseteq S$ is an \textbf{ideal} if $SI\subseteq I$ and $IS\subseteq I$. If $(S, \cdot)$ is a semigroup, the semigroup $(S, \circ)$
where $x \circ y = yx$ is called the \textbf{opposite semigroup} and denoted by $S^{\mathrm{op}}$ \cite{h95, ganyus}. 

The definitions given below and all other relevant fundamental definitions, such as directed walks, paths, and cycles, spanning subdigraphs, and complete digraphs, are the same as those in \cite{bg09,syc}. 
All digraphs in this paper are simple digraphs, i.e., digraphs with no loops and no multi-arcs. A digraph $G$ is said to be \textbf{connected} if for every distinct vertices $x$ and $y$, there exists a directed path from $x$ to $y$. The \textbf{directed distance} $\overrightarrow{d}(x,y)$ is the length of the shortest directed path from $x$ to $y$. If no such path exists, then $\overrightarrow{d}(x,y)=\infty$. The \textbf{diameter} is the greatest directed distance between any vertices in $G$ and is denoted by $\Diam(G)$. The \textbf{girth} of $G$, denoted by $\Girth(G)$, is the length of the shortest directed cycle of $G$. If $G$ has no directed cycles, then $\Girth(G)=\infty$. The \textbf{out-degree} and the \textbf{in-degree} of a vertex are the numbers of arcs leaving and entering it, respectively. A vertex $v$ of a digraph is a \textbf{sink} if its out-degree is zero and its in-degree is positive, and a \textbf{source} if its in-degree is zero and its out-degree is positive. Altogether, sinks and sources are called \textbf{end vertices}. The \textbf{converse} of a digraph is the digraph with the same vertices and the reversed arcs.

\begin{definition}\label{def:zdg} \cite{w07} 
	The \textbf{zero-divisor digraph} of $S$ is the simple digraph $\zdg(S)$ with vertices $Z(S)^{*}$, and for distinct $x,y\in Z(S)^{*}$, $x\to y$ is an arc of $\zdg(S)$ if and only if $xy=0$.
\end{definition}

\begin{definition}\label{def:ezdg}
	\cite{syc} The \textbf{extended zero-divisor digraph} of $S$ is the simple digraph $\ezdg(S)$ with vertices $Z(S)^{*}$, and for distinct $x,y\in Z(S)^{*}$, $x\to y$ is an arc of $\ezdg(S)$ if and only if $x^{n}y^{m}=0$ for some $n,m\in\ZP$ with $x^{n}\neq 0$ and $y^{m}\neq 0$.
\end{definition}
If the semigroup with zero has no zero divisor under product, then in the zero-divisor digraph and in the extended zero-divisor digraph there are no vertices.
\begin{proposition}\cite[Proposition 3.2]{syc}\label{propsub}
	$\overrightarrow{\Gamma}(S)$ is a spanning subdigraph of 	$\overrightarrow{\Gamma}_E (S)$.
\end{proposition}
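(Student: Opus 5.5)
The plan is to check the two ingredients of being a spanning subdigraph directly from Definitions~\ref{def:zdg} and~\ref{def:ezdg}. The first is that the vertex sets coincide. The second is that every arc of $\zdg(S)$ is an arc of $\ezdg(S)$.

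For the vertex sets, both definitions take the vertex set to be $Z(S)^{*}$, so they are literally equal and there is nothing to prove. For the same reason, if $Z(S)^{*}$ is empty then both digraphs are empty and the statement holds trivially.

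For the arcs, let $x\to y$ be an arc of $\zdg(S)$. Then $x,y\in Z(S)^{*}$ are distinct and $xy=0$. Choose $n=m=1$. We have $x^{1}=x\neq 0$ and $y^{1}=y\neq 0$ because $x,y\in Z(S)^{*}$, and $x^{1}y^{1}=xy=0$. These are exactly the conditions in Definition~\ref{def:ezdg}, so $x\to y$ is an arc of $\ezdg(S)$.

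Both digraphs are simple and share the same vertex set, so the arc inclusion is all that is needed, and $\zdg(S)$ is a spanning subdigraph of $\ezdg(S)$. No step is a real obstacle. The only care needed is to note that the nonvanishing conditions $x^{n}\neq0$ and $y^{m}\neq0$ hold automatically for $n=m=1$, since vertices are nonzero by definition.
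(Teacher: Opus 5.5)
Your proof is correct: both digraphs have vertex set $Z(S)^{*}$, and every arc $x\to y$ of $\zdg(S)$ satisfies the defining condition of $\ezdg(S)$ with $n=m=1$, because $x,y\neq 0$ as vertices. The paper gives no proof of its own and simply cites \cite{syc}; your argument is the natural one-line verification of that cited fact.
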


\begin{lemma}\label{lem:stab}
	\cite[Lemma 3.3]{syc} Let $x\in S^*$ with $x\notin\Nil(S)$. Then $\Annl(x^{2})=\Annl(x)$ if and only if $\Annl(x^{n})=\Annl(x)$, and $\Annr(x^{2})=\Annr(x)$ if and only if $\Annr(x^{n})=\Annr(x)$, for all $n\geq 2$.
\end{lemma}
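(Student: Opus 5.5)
The reverse implication is immediate: if $\Annl(x^{n})=\Annl(x)$ for all $n\geq 2$, then taking $n=2$ gives $\Annl(x^{2})=\Annl(x)$. The same holds for right annihilators. So the content of the lemma is the forward implication, which I will prove by induction on $n$.

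First I record the trivial inclusion $\Annl(x)\subseteq\Annl(x^{n})$ for every $n\in\ZP$. Indeed, if $sx=0$ then $sx^{n}=(sx)x^{n-1}=0$. Hence it suffices to show $\Annl(x^{n})\subseteq\Annl(x)$, assuming $\Annl(x^{2})=\Annl(x)$.

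The base case $n=2$ is the hypothesis. For the inductive step, suppose $\Annl(x^{n})=\Annl(x)$ for some $n\geq 2$, and let $s\in\Annl(x^{n+1})$. Regroup the product as $(sx^{n-1})x^{2}=sx^{n+1}=0$. This shows $sx^{n-1}\in\Annl(x^{2})=\Annl(x)$, so $(sx^{n-1})x=sx^{n}=0$. Thus $s\in\Annl(x^{n})=\Annl(x)$ by the inductive hypothesis. This closes the induction. The regrouping is the only step that needs an idea, and it uses nothing beyond associativity.

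For right annihilators, I run the symmetric argument. If $x^{n+1}s=0$, then $x^{2}(x^{n-1}s)=0$, so $x^{n-1}s\in\Annr(x^{2})=\Annr(x)$. Hence $x^{n}s=0$, and induction finishes as before. Alternatively, one can apply the left statement in $S^{\mathrm{op}}$, where left annihilators become right annihilators and powers of $x$ are unchanged.

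The hypothesis $x\notin\Nil(S)$ is never used in this argument. Its role in the statement is presumably only to keep the condition meaningful: if $x$ were nilpotent, then some power $x^{n}=0$ would give $\Annl(x^{n})=S$. No step of the argument is a genuine obstacle.
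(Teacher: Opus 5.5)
Your proof is correct. The regrouping $(sx^{n-1})x^{2}=sx^{n+1}$ gives $\Annl(x^{n+1})\subseteq\Annl(x^{n})$ once $\Annl(x^{2})=\Annl(x)$, and together with the trivial inclusion $\Annl(x)\subseteq\Annl(x^{n})$ this closes the induction. The right-annihilator case follows symmetrically or via $S^{\mathrm{op}}$. You are also right that the hypothesis $x\notin\Nil(S)$ is never needed.

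The paper gives no proof of its own here; it only quotes \cite[Lemma 3.3]{syc}, so there is no argument to compare against.

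One addition makes your converse robust to the other reading of the quantifier. Read per $n$, the statement is $\Annl(x^{2})=\Annl(x)$ if and only if $\Annl(x^{n})=\Annl(x)$, for each fixed $n\geq 2$. Then ``take $n=2$'' does not suffice, and you need the chain $\Annl(x)\subseteq\Annl(x^{2})\subseteq\Annl(x^{n})$. This chain holds because $sx^{2}=0$ implies $sx^{n}=(sx^{2})x^{n-2}=0$ for $n\geq 3$. Equality of the two ends then forces $\Annl(x^{2})=\Annl(x)$, and likewise on the right.
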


\begin{theorem}\label{thm:syc-eq}
	\cite[Theorem 3.6]{syc}	
	The following are equivalent.
	\begin{enumerate}
		\item $\overrightarrow{\Gamma}_E (S) = \overrightarrow{\Gamma}(S)$ 
		\item If $Nil(S) \neq \{0\}$, then for all $x \in Nil(S)$, $n_x \leq 3$ and if $n_x = 3$ then $Ann_{l}(x^{2}) \setminus Ann_{l}(x) = \{x\}$ and $Ann_{r}(x^{2}) \setminus Ann_{r}(x) = \{x\}$. Moreover $Ann_l(y^2) = Ann_l(y)$ and $Ann_r(y^2) = Ann_r(y)$ for all non-zero $y \in Z(S) \setminus Nil(S)$. 
		\item If $Nil(S) \neq \{0\}$, then for all $x \in Nil(S)$, $n_x \leq 3$ and if $n_x = 3$ then $Ann_{l}(x^{2}) \setminus Ann_{l}(x) = \{x\}$ and $Ann_{r}(x^{2}) \setminus Ann_{r}(x) = \{x\}$. Moreover $\sqrt{Ann_{l}(y)} \setminus Nil(S) \subseteq Ann_{l}(y)$ and $\sqrt{Ann_{r}(y)} \setminus Nil(S) \subseteq Ann_{r}(y)$ for all  $y \in Z(S)^*$.
	\end{enumerate}
\end{theorem}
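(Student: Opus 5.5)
The theorem is a cycle of equivalences, and I would prove it by working at the level of single arcs. By Proposition~\ref{propsub}, $\zdg(S)$ is a spanning subdigraph of $\ezdg(S)$. So condition (1) says exactly this: whenever distinct $x,y\in Z(S)^*$ satisfy $x^ny^m=0$ with $x^n\neq0\neq y^m$, we already have $xy=0$. I would prove (1)$\Leftrightarrow$(2) directly, and then (2)$\Leftrightarrow$(3) by translating the annihilator-stability condition on non-nilpotent elements into the radical formulation.

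\emph{(1)$\Rightarrow$(2).} I argue by contrapositive, producing an extra arc in each failing case.
\begin{itemize}
\item Suppose some $x\in\Nil(S)$ has $n_x\geq4$. Put $a=x$ and $b=x^{2}$. These are distinct, since $x^{2}=x$ would force $x=0$. Both are nonzero zero divisors, and $a\,b^{m}$ with $m=n_x-2$ is zero while $b^{m}\neq0$ for a suitable choice (e.g.\ $a^{1}b^{\lceil (n_x-1)/2\rceil}=0$). On the other hand $ab=x^{3}\neq0$. So $a\to b$ is an arc of $\ezdg(S)$ but not of $\zdg(S)$.
\item Suppose $n_x=3$ and some $s\neq x$ lies in $\Annl(x^2)\setminus\Annl(x)$. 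If $s\neq0$, it is a zero divisor, and $sx^2=0$ with $x^2\neq0$ gives the arc $s\to x$ in $\ezdg(S)$, while $sx\neq0$ shows it is missing in $\zdg(S)$. One must also check that $x$ itself belongs to the difference ($x\cdot x^2=0$, $x\cdot x\neq0$), so the difference is exactly $\{x\}$. The right-annihilator case is symmetric.
\item Suppose $y\in Z(S)^*\setminus\Nil(S)$ has $\Annl(y^2)\neq\Annl(y)$. Pick $s$ with $sy^2=0$ and $sy\neq0$. Then $s\neq0$ is a zero divisor and $y^2\neq0$. If $s\neq y$, the arc $s\to y$ lies in $\ezdg(S)$ but not in $\zdg(S)$. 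The case $s=y$ would make $y^3=0$, contradicting $y\notin\Nil(S)$.
\end{itemize}

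\emph{(2)$\Rightarrow$(1).} Take $x^ny^m=0$ with $x^n\neq0\neq y^m$; I must show $xy=0$. If $y\notin\Nil(S)$, I use Lemma~\ref{lem:stab} for $y$ and the opposite-side condition. This gives $x^n\in\Annl(y^m)=\Annl(y)$, so $x^ny=0$, i.e.\ $x\in$ the right annihilator of $y$ up to powers. If moreover $x\notin\Nil(S)$, then $y\in\Annr(x^n)=\Annr(x)$. If instead $y$ is nilpotent, then $n_y\le3$, and the condition $y^m\neq0$ forces $m\le2$; when $m=2$, $n_y=3$ and $x^n\in\Annl(y^2)\setminus\Annl(y)$ would force $x^n=y$, which I then exclude. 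The case analysis on nilpotency of $x$ and $y$ is where care is needed. When $x^n=y$ for nilpotent $x$ with $n\ge2$, I would use $n_x\le3$ to get $x^2=y$ and derive a contradiction from $y^2=x^4=0\neq y^2$. This interleaving of the two conditions is the main obstacle, and I would organize it as a $2\times2$ table of cases.

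\emph{(2)$\Leftrightarrow$(3).} The nilpotent clauses are identical, so only the last sentence of each condition needs comparing. Assume (2) and let $s\in\sqrt{\Annl(y)}\setminus\Nil(S)$, so $s^ky=0$. Applying Lemma~\ref{lem:stab} to $s$ on the right side, i.e.\ $\Annr(s^k)=\Annr(s)$, gives $sy=0$. Conversely, assume (3) and let $y$ be non-nilpotent with $sy^2=0$. Then $y\in\sqrt{\Annr(s)}\setminus\Nil(S)\subseteq\Annr(s)$, which gives $sy=0$. The radical of the right annihilator handles the left equality, with the indices of the two conditions swapped.
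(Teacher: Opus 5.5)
The paper does not prove this statement; it quotes it from \cite{syc}, so there is no in-paper argument to compare against line by line. Your architecture is the natural one:
\begin{itemize}
\item reduce (1) to the arc-wise implication $x^ny^m=0\Rightarrow xy=0$;
\item get (1)$\Rightarrow$(2) by exhibiting arcs of $\ezdg(S)$ missing from $\zdg(S)$;
\item get (2)$\Rightarrow$(1) by a case split on nilpotency using Lemma~\ref{lem:stab};
\item translate (2)$\Leftrightarrow$(3) directly.
\end{itemize}
Your third bullet is exactly the argument the paper uses for the ring analogue in Theorem~\ref{thm:ring-eq}. The (2)$\Leftrightarrow$(3) part is sound once you check that the element fed into (2) or (3) lies in $Z(S)^*$. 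If $s^ky=0$ with $y\neq0$, taking the least such $k$ shows $s$ is a left zero divisor. If $s\neq0$ and $sy^2=0$ with $y^2\neq0$, then $s\in Z(S)^*$.

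Two steps need repair.

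\emph{The witness for $n_x\geq4$ is wrong.} With $m=n_x-2$ you get $b^{m}=x^{2n_x-4}=0$. The alternative $a\,b^{\lceil(n_x-1)/2\rceil}$ fails for even $n_x$. For $n_x=4$ one has $ab=x^3\neq0$ and $b^2=0$, so no witness of the form $a^1b^m$ exists at all. Put the large exponent on $a$ instead: $a^{n_x-2}b=x^{n_x}=0$ with $a^{n_x-2}=x^{n_x-2}\neq0$ and $b=x^2\neq0$, while $ab=x^3\neq0$.

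\emph{A case is missing in (2)$\Rightarrow$(1).} You reduce every case to $x^ny=0$, but you only strip the exponent $n$ when $x\notin\Nil(S)$. If $x\in\Nil(S)^*$, then $n<n_x\leq3$. The case $n=2$, $n_x=3$ requires the clause $\Annr(x^2)\setminus\Annr(x)=\{x\}$: if $xy\neq0$, then $y\in\Annr(x^2)\setminus\Annr(x)$ forces $y=x$, contradicting $x\neq y$.

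Your sketch never invokes this right-hand nilpotent clause, and it is indispensable. Any $t\neq x$ with $x^2t=0\neq xt$ gives an arc $x\to t$ of $\ezdg(S)$ that is absent from $\zdg(S)$. So this is precisely the unfilled cell of your $2\times2$ table.

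A small correction as well: $x^ny=0$ means $x^n\in\Annl(y)$, not that $x$ lies in a right annihilator of $y$. With these fixes the argument is complete.
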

\begin{corollary}\label{ggg}\cite[Corollary 3.7]{syc}
	Let $S$ be a non-commutative semigroup with zero. Then, $\overrightarrow{\Gamma}_E(S) \neq \overrightarrow{\Gamma}(S)$ if there exist an $x \in Nil(S)$ such that $n_x \geq 4$.
\end{corollary}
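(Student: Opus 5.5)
We argue by contraposition, in the form: if there is $x\in\Nil(S)$ with $n_x\geq 4$, then we exhibit an arc of $\ezdg(S)$ that is not an arc of $\zdg(S)$. One route is simply to invoke Theorem~\ref{thm:syc-eq}: condition (2) requires $n_x\leq 3$ for every $x\in\Nil(S)$, so it fails, and by the equivalence (1)$\Leftrightarrow$(2) we get $\ezdg(S)\neq\zdg(S)$. Non-commutativity of $S$ plays no role in this argument. Because Theorem~\ref{thm:syc-eq} is cited from \cite{syc}, I would also give the direct witness, so that the corollary does not depend on that theorem's internal details.

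Set $n=n_x\geq 4$ and take the candidate arc between $x$ and $y=x^{2}$. Both are nonzero, since $n\geq 3$, and they are distinct: if $x=x^{2}$ then $x$ is idempotent, so $x^{k}=x\neq 0$ for all $k$, contradicting nilpotency. Both are zero divisors, because $x\cdot x^{n-1}=0$ with $x^{n-1}\neq 0$, and similarly $x^{2}\cdot x^{n-2}=0$ with $x^{n-2}\neq 0$ (here $n-2\geq 2\geq 1$). Hence $x,x^{2}\in Z(S)^{*}$.

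In $\zdg(S)$ the arc $x\to x^{2}$ would require $x^{3}=0$, which is impossible since $n\geq 4$. In $\ezdg(S)$, take exponents $1$ and $m$ with $2m\geq n-1$ and $x^{2m}\neq 0$. The choice $m=\lceil (n-1)/2\rceil$ gives $2m\in\{n-1,n\}$; if $2m=n$ the condition $x^{2m}\neq 0$ fails. So I would instead choose exponents adapted to parity: we need $a+2b\geq n$ with $x^{a}\neq 0$ and $x^{2b}\neq 0$, that is, $a\leq n-1$ and $2b\leq n-1$. Taking $a=n-1$ and $b=1$ works, since $2\leq n-1$. Then $x^{n-1}(x^{2})^{1}=x^{n+1}=0$ with $x^{n-1}\neq 0$ and $x^{2}\neq 0$, so $x\to x^{2}$ is an arc of $\ezdg(S)$ but not of $\zdg(S)$.

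The only delicate points are the distinctness and nonzero checks and the exponent bookkeeping. These are exactly where $n\geq 4$ is used: it makes $x^{3}\neq 0$ and also guarantees $x^{2}\neq 0$.
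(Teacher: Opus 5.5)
Your proposal is correct. The paper gives no proof of its own here. It quotes the statement from \cite{syc}, where it appears as a corollary of Theorem~\ref{thm:syc-eq}. Your first route is exactly that derivation: condition~2 forces $n_x\leq 3$, so it fails, and hence condition~1 fails.

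Your direct witness is a genuinely more elementary argument, and it is sound:
\begin{enumerate}
\item $x\neq x^{2}$, because a nilpotent idempotent would be $0$.
\item Both $x$ and $x^{2}$ lie in $Z(S)^{*}$.
\item $x\cdot x^{2}=x^{3}\neq 0$ rules out the arc $x\to x^{2}$ in $\zdg(S)$.
\item $x^{n-1}(x^{2})^{1}=0$ with $x^{n-1}\neq 0\neq x^{2}$ puts the arc in $\ezdg(S)$. The exponent $n-2$ would work equally well, and the detour through $m=\lceil (n-1)/2\rceil$ can simply be dropped.
\end{enumerate}

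What the direct route buys:
\begin{itemize}
\item independence from the annihilator conditions packed into Theorem~\ref{thm:syc-eq};
\item an explicit offending arc;
\item confirmation that the non-commutativity hypothesis in the statement is superfluous.
\end{itemize}

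It also explains the case split in the paper's proof of Theorem~\ref{thm:ring-eq}. For $n_x\geq 4$ the purely multiplicative witness $x\to x^{2}$ is available. For $n_x=3$ one has $x\cdot x^{2}=0$, so that arc already lies in $\zdg$. The paper therefore needs the additive element $x+x^{2}$ in that case, consistent with Example~\ref{rem:semigroup-vs-ring}, where $n_x=3$ and the semigroup digraphs coincide.

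One small wording point: calling your argument a contraposition is a misnomer. You prove the implication directly by exhibiting the arc.
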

\begin{proposition}\label{prop2}\cite[Proposition 2]{w07}
	The directed distance between any two vertices in $\zdg(S)$ is at most $3$, and $\zdg(S)$ is connected if and only if all zero divisors of $S$ are two-sided.
\end{proposition}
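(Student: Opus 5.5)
The plan is to prove the distance bound in its precise form: for a left zero divisor $x$ and a right zero divisor $y$ in $Z(S)^*$ with $x\neq y$, $\overrightarrow{d}(x,y)\le 3$. This covers every pair joined by a directed path. The first arc of any path from $x$ forces $x$ to be a left zero divisor, and the last arc into $y$ forces $y$ to be a right zero divisor.

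\textbf{Distance bound.} Fix such $x,y$. Choose $a\in S^*$ with $xa=0$ and $b\in S^*$ with $by=0$; then $a$ and $b$ lie in $Z(S)^*$.
\begin{enumerate}[label=(\roman*)]
\item If $xy=0$, then $x\to y$ and the distance is $1$. Assume from now on that $xy\neq 0$.
\item If $ab\neq 0$, then $x(ab)=(xa)b=0$ and $(ab)y=a(by)=0$. Hence $ab\in Z(S)^*$ and $x\to ab\to y$ is a walk of length $2$. If $ab=x$ or $ab=y$, the relations would give $xy=0$, contrary to (i), so this is a genuine path.
\item If $ab=0$, then $x\to a\to b\to y$ is a walk of length $3$. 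The coincidences only shorten it:
\begin{itemize}
\item $a=y$ gives $xy=0$;
\item $b=x$ gives $xy=0$;
\item $a=b$ gives the path $x\to a\to y$;
\item $a=x$ gives $x\to b\to y$;
\item $b=y$ gives $x\to a\to y$.
\end{itemize}
\end{enumerate}
In every case $\overrightarrow{d}(x,y)\le 3$.

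\textbf{Connectedness.} Suppose every zero divisor is two-sided. Then every vertex is both a left and a right zero divisor, so by the bound above any two distinct vertices are joined by a path of length at most $3$, and $\zdg(S)$ is connected.

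Conversely, suppose $\zdg(S)$ is connected and let $x\in Z(S)^*$.
\begin{itemize}
\item If there is another vertex $z$, there are paths $x\to\cdots\to z$ and $z\to\cdots\to x$. The first arc gives $xu=0$ with $u\neq0$, and the last arc gives $wx=0$ with $w\ne 0$, so $x$ is two-sided.
\item If $x$ is the only vertex, then $xu=0$ or $ux=0$ for some $u\in S^*$. Such a $u$ is a zero divisor, so $u=x$, hence $x^2=0$ and $x$ is two-sided.
\end{itemize}

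The only delicate point is the bookkeeping of coincidences among $x,y,a,b,ab$ in (ii) and (iii), which must not produce loops; the case analysis above handles it.
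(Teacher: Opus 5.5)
Your proof is correct. It is the standard argument for this result: $x\to ab\to y$ if $ab\neq 0$, and $x\to a\to b\to y$ otherwise. The paper gives no proof of its own and imports the statement from \cite{w07}. You also rightly read the distance bound as going from a left zero divisor to a right zero divisor, as the paper's introduction states it, since taken literally it fails whenever $\zdg(S)$ is disconnected.

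One coincidence is missing from your list: $a=x$ together with $b=y$. It cannot occur, because it would force $xy=ab=0$.
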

\begin{theorem}\label{theo4}\cite[Theorem 4]{w07}
	If an arc of $\zdg(S)$ belongs to a directed cycle of length greater than 4, then it must belong to a directed cycle of length 4 or less. If neither of the arc’s vertices satisfy $x^2=0$, or if the reverse arc doesn’t belong to $\zdg(S)$, then this cycle can be chosen to have length no less than 3.
\end{theorem}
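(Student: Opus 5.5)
The plan is a direct construction. From a long directed cycle through the arc $x\to y$ I build a short detour from $y$ back to $x$ out of a single product of two vertices of that cycle.

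Let the arc $x\to y$ lie on a directed cycle
\[
x\to y\to z_1\to\cdots\to z_k\to x, \qquad k\geq 3,
\]
which has length $k+2\geq 5$. Put $z=z_1$ (the successor of $y$) and $w=z_k$ (the predecessor of $x$). Because $k\geq 3$ and the vertices of a cycle are distinct, $x,y,z,w$ are four distinct vertices. The arcs give $xy=0$, $yz=0$ and $wx=0$.

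The key element is $u=zw$. By associativity,
\[
yu=(yz)w=0, \qquad ux=z(wx)=0.
\]
Every candidate arc below lies in $\zdg(S)$, because its tail left-annihilates its head. Each endpoint is a nonzero zero divisor: $z,w,x,y$ are vertices, and $u$ in Case 2 is nonzero with $yu=0$, so $u\in Z(S)^*$. The proof then splits on what $u$ is.

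\textbf{Case 1: $u=0$.} Then $zw=0$, so $z\to w$ is an arc. This gives the 4-cycle $x\to y\to z\to w\to x$ on four distinct vertices.

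\textbf{Case 2: $u\neq 0$ and $u\notin\{x,y\}$.} Then $y\to u$ and $u\to x$ are arcs. This gives the 3-cycle $x\to y\to u\to x$.

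\textbf{Case 3: $u=x$.} Then $yx=yu=0$, so the reverse arc $y\to x$ exists and $x\to y\to x$ is a 2-cycle. Also $x^2=ux=0$.

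\textbf{Case 4: $u=y$.} Then $yx=ux=0$, so again the reverse arc exists and we get a 2-cycle. Also $y^2=yu=0$.

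In all four cases the arc $x\to y$ lies on a directed cycle of length at most $4$, which proves the first assertion.

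For the second assertion, look at the only cases that produce a cycle of length $2$, namely Cases 3 and 4. Each of them forces two things at once: the reverse arc $y\to x$ belongs to $\zdg(S)$, and one of the arc's vertices squares to zero ($x^2=0$ or $y^2=0$). So if neither vertex satisfies $x^2=0$ (respectively $y^2=0$), or if the reverse arc is absent, Cases 3 and 4 cannot occur. Then Case 1 or Case 2 applies, and the cycle has length $3$ or $4$.

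The step needing the most care is checking distinctness and vertex membership, so that the cycles produced are genuine cycles in the simple digraph $\zdg(S)$. This is exactly where the hypothesis that the original cycle has length at least $5$ is used. It guarantees $z\neq w$ and that $z,w\notin\{x,y\}$, so the 4-cycle in Case 1 has four distinct vertices.
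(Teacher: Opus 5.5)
Your argument is correct and complete. The case split on $u=zw$, with $z$ the successor of $y$ and $w$ the predecessor of $x$ on the long cycle, produces one of three outcomes:
\begin{enumerate}
\item the $4$-cycle $x\to y\to z\to w\to x$;
\item the $3$-cycle $x\to y\to u\to x$;
\item a $2$-cycle, which forces both $yx=0$ and one of $x^{2}=0$ or $y^{2}=0$.
\end{enumerate}
Together these give both assertions.

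The paper does not prove this statement itself; it quotes it from \cite[Theorem 4]{w07}. So there is no internal proof to compare against. Your product-of-neighbours construction is the standard argument for this kind of result and gives a self-contained proof.

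One minor remark: distinctness of $x,y,z,w$ only needs the original cycle to have length at least $4$. So that is not really the step where the length-$\geq 5$ hypothesis is used.
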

\begin{lemma}\label{lem2}\cite[Example 2.8]{w05}
	For any field $F$, let $R$ be the ring of $n\times n$ $(n>1)$ matrices over $F$. Then the
	diameter of $\zdg(R)$ is 2.
\end{lemma}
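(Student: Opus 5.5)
The plan is to work directly with the matrices. The bound $\Diam(\zdg(R))\le 2$ will come from an explicit rank-one intermediate vertex, and the lower bound from a concrete pair of vertices with no arc between them.

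First we identify the vertex set. A nonzero $A\in M_n(F)$ is a zero divisor exactly when it is singular. If $A$ is invertible, then $AC=0$ or $CA=0$ forces $C=0$. If $A$ is singular, choose a nonzero $u\in\ker A$ and a nonzero row vector $w$ with $wA=0$. Then $A(ue^T)=0$ and $(e\,w)A=0$ for any nonzero column vector $e$, so $A$ is a two-sided zero divisor. Hence $Z(R)^*$ is the set of nonzero singular matrices.

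Next we prove the upper bound. Take distinct vertices $A,B$ with $AB\neq 0$. Choose a nonzero column vector $u$ with $Au=0$ and a nonzero row vector $v$ with $vB=0$; both exist because $A$ and $B$ are singular. Put $C=uv$, a nonzero rank-one matrix. It is singular since $n>1$, so $C$ is a vertex, and $AC=(Au)v=0$ and $CB=u(vB)=0$.

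The only delicate point is that $C$ must differ from $A$ and $B$, since the digraph has no loops. If $C=A$, then $AB=CB=0$, which contradicts $AB\neq 0$. If $C=B$, then $AB=AC=0$, again a contradiction. So $A\to C\to B$ is a directed path of length $2$, and $\overrightarrow{d}(A,B)\le 2$ for all distinct vertices.

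Finally, to get equality we exhibit two vertices at distance exactly $2$. Take $A=E_{11}$ and $B=E_{11}+E_{12}$, both singular for $n\ge 2$. Then $AB=E_{11}+E_{12}\neq 0$, so there is no arc $A\to B$, and therefore $\overrightarrow{d}(A,B)=2$. Hence $\Diam(\zdg(R))=2$.

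I expect no real obstacle here. The only care needed is the distinctness check for the middle vertex, which is handled by the contradiction argument above.
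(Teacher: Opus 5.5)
Your proof is correct and complete: the identification of $Z(R)^*$ with the nonzero singular matrices, the rank-one middle vertex, the distinctness check and the lower-bound witness all hold.

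The paper does not prove this lemma. It quotes it from Wu \cite[Example 2.8]{w05} and uses it as a black box in Corollary~\ref{cor:matrix-diam}. So your argument is a self-contained replacement for the citation rather than a variant of an argument in the paper.

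\begin{itemize}
\item Your upper bound uses the rank-one vertex $C=uv$ with $Au=0$ and $vB=0$. This is the explicit form of a general fact: in the prime ring $M_n(F)$, the nonzero right ideal $\Annr(A)$ and the nonzero left ideal $\Annl(B)$ meet nontrivially.
\item Your distinctness check, that $C=A$ or $C=B$ would force $AB=0$, is the same device the paper uses for the middle vertex $z_k$ in Theorem~\ref{thm:nildist}.
\item Your lower-bound pair $E_{11}$, $E_{11}+E_{12}$ is exactly the pair $A,B$ the paper uses in Corollary~\ref{cor:matrix-diam}.
\item Your first step shows that every zero divisor of $M_n(F)$ is two-sided. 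Proposition~\ref{prop2} then already gives connectedness with diameter at most $3$, and your rank-one construction is precisely what improves $3$ to $2$.
\end{itemize}

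The citation is shorter, but your route is elementary and needs nothing beyond linear algebra.
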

\begin{theorem}\label{theo24}\cite[Theorem 2.4]{w05}
	Let $R$ be an artinian ring. Then, $\zdg(R)$ is connected if and only if every one-sided identity element of $R$ is the two-sided identity of $R$.
\end{theorem}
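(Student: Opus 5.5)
By Proposition~\ref{prop2}, $\zdg(R)$ is connected if and only if every zero divisor of $R$ is two-sided. So I would prove the following: in an artinian ring $R$, every zero divisor is two-sided if and only if every one-sided identity of $R$ is a two-sided identity. I do not assume that $R$ has an identity. I do assume "artinian" means the descending chain condition holds on both left and right ideals, which the symmetry argument below needs.

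\emph{Only if.} Suppose $e$ is a left identity, so $ex=x$ for all $x\in R$, but $e$ is not a right identity. Pick $x$ with $xe\neq x$ and put $a=x-xe\neq 0$. Since $e^2=e$, we get $ae=xe-xe^2=0$, so $e$ is a nonzero right zero divisor. On the other hand, $ey=0$ forces $y=ey=0$, so $e$ is not a left zero divisor. Hence $e$ is a one-sided zero divisor, contradicting the assumption. The case of a right identity is the same argument in $R^{\mathrm{op}}$.

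\emph{If.} This is the main step. Suppose $x\in Z(R)^*$ satisfies $xy=0$ for some $y\neq 0$, but $\Annl(x)=0$. The chain $Rx\supseteq Rx^2\supseteq\cdots$ is a descending chain of left ideals, so it stabilizes: $x^n=rx^{n+1}$ for some $n$ and some $r\in R$. Rewrite this as $(x^{n-1}-rx^n)x=0$ (read $x^0$ formally, which is harmless since we only need $\Annl(x)=0$). Cancelling $x$ on the right repeatedly, using $\Annl(x)=0$, gives $x=rx^2$.

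Now set $e=rx$. Then $ex=x$, and $(e^2-e)x=e(ex)-ex=0$, so $e^2=e$. For every $s\in R$, $(se-s)x=srx^2-sx=0$, hence $se=s$. Thus $e$ is a right identity. By hypothesis $e$ is then a two-sided identity, so $y=ey=r(xy)=0$, a contradiction.

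The dual case, where $x$ is a right zero divisor with $\Annr(x)=0$, follows by applying the same argument to $R^{\mathrm{op}}$, using the descending chain condition on right ideals. The only delicate point is the cancellation from $x^n=rx^{n+1}$ down to $x=rx^2$ without a unit. I would handle it by induction on the exponent, cancelling a right factor of $x$ at each step via $\Annl(x)=0$.
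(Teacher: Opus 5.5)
The paper does not prove this statement. It is quoted from \cite[Theorem 2.4]{w05} and used as a black box, for instance in Theorem~\ref{thm:ring-conn}. Up to one missing line, your argument is a correct, self-contained replacement for that citation.

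\begin{itemize}
\item The reduction to two-sidedness of zero divisors via Proposition~\ref{prop2} is legitimate.
\item Your necessity direction uses the same $x-xe$ device the paper uses in the proof of Theorem~\ref{thm:ring-conn}. The paper routes it through sinks and Theorem~\ref{thm:sinks}; you route it through one-sided zero divisors.
\item Your sufficiency direction builds the right identity $e=rx$ from the chain $Rx\supseteq Rx^2\supseteq\cdots$ and then contradicts $ey=r(xy)=0\neq y$. This is clean.
\end{itemize}

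What your route adds over the bare citation: necessity visibly needs no chain condition at all, and in sufficiency a left-but-not-right zero divisor $x$ yields an explicit right identity $rx$ that is not a left identity.

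There are two points to tighten.

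First, the step that needs care is not the cancellation you flagged but the stabilization itself. Since you allow $R$ without identity, $Rx^n=Rx^{n+1}$ does not give $x^n\in Rx^{n+1}$, because $x^n$ need not lie in $Rx^n$ (already $x\notin Rx$ in a zero ring). The fix is to go one index further. If the chain is constant from $n$ on, then $x^{n+1}=x\cdot x^n\in Rx^n=Rx^{n+2}$, so $x^{n+1}=rx^{n+2}$.

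From there the descent is routine. For $k\geq 2$, $(x^{k-1}-rx^k)x=x^k-rx^{k+1}=0$, and $\Annl(x)=0$ gives $x^{k-1}=rx^k$. You stop at $k=2$ with $x=rx^2$, so no formal $x^0$ ever appears.

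Second, the dual case genuinely uses the descending chain condition on right ideals. You should state explicitly that ``artinian'' is taken on both sides. The paper does not specify this, and your proof covers exactly that reading.
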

%%%%%%%%%%%%%%%%%%%%%%%%%%%%%%%%%%%%%%%%%%%%%%%%%%%%%%%%%%%%%%%%%%%%%%%%%%%%%%%%
\section{Distances and End Vertices}\label{sec:distance}

This section bounds the directed distance between nilpotent vertices of the zero-divisor digraph $\zdg(S)$. Moreover, this section presents a diameter bound by replacing that of Proposition \ref{prop2} assumption that every zero divisor is two-sided with the stronger assumption that every zero divisor is nilpotent, and proves that the sinks and sources of $\zdg(S)$ persist in $\ezdg(S)$.

\begin{theorem}\label{thm:nildist}
	Let $x,y\in\Nil(S)^{*}$ with $x\neq y$. Then
	$$
	\overrightarrow{d}(x,y)\leq 2
	$$
	in $\zdg(S)$.
\end{theorem}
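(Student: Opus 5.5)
The plan is a direct construction. If $xy=0$, then $x\to y$ is an arc of $\zdg(S)$ and $\overrightarrow{d}(x,y)=1$. So assume $xy\neq 0$; we will produce a vertex $w$ with $xw=0$ and $wy=0$, giving the directed path $x\to w\to y$ of length $2$. Note first that $x$ and $y$ are genuine vertices of $\zdg(S)$, because a nonzero nilpotent element satisfies $x\cdot x^{n_x-1}=0$ with $x^{n_x-1}\neq 0$.

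To build $w$, we absorb powers of $x$ on the left and then powers of $y$ on the right.

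First, let $i$ be the largest positive integer with $x^{i}y\neq 0$. It exists because $i=1$ works by assumption and $x^{n_x}y=0$, so $1\le i<n_x$. By maximality, $x^{i+1}y=0$.

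Next, let $j$ be the largest positive integer with $x^{i}y^{j}\neq 0$. It exists because $j=1$ works and $x^{i}y^{n_y}=0$. Set $w=x^{i}y^{j}\neq 0$. Then
\[
xw=x^{i+1}y\,y^{j-1}=0
\qquad\text{and}\qquad
wy=x^{i}y^{j+1}=0 .
\]
(When $j=1$, read $x^{i+1}y\,y^{j-1}$ simply as $x^{i+1}y$.) In particular $w$ is a left zero divisor, so $w\in Z(S)^{*}$ is a vertex.

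The only remaining point, and the only place some care is needed, is that $w$ must be distinct from both $x$ and $y$ so that the two arcs are legitimate arcs of a simple digraph. If $w=x$, then $wy=0$ gives $xy=0$, contradicting our assumption. If $w=y$, then $xw=0$ gives $xy=0$, the same contradiction. Hence $x\to w\to y$ is a directed path of length $2$ in $\zdg(S)$, and in all cases $\overrightarrow{d}(x,y)\le 2$.
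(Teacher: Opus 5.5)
Your proof is correct and follows essentially the same route as the paper: both take the middle vertex to be a product $x^{a}y^{b}$ chosen by maximality, so that $x(x^{a}y^{b})=0=(x^{a}y^{b})y$, and both rule out $x^{a}y^{b}\in\{x,y\}$ by the same contradiction with $xy\neq 0$. The only cosmetic difference is in the choice of $a$: the paper fixes $a=n_x-1$ and maximizes only the power of $y$, which allows the middle vertex to be $x^{n_x-1}$ itself, whereas you first maximize $i$ subject to $x^{i}y\neq 0$, which guarantees $b\geq 1$.
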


\begin{proof}
	If $xy=0$, then $x\to y$ is an arc in $\zdg(S)$ and $\overrightarrow{d}(x,y)=1$. Suppose that $xy\neq 0$. Define $z_{0}=x^{n_{x}-1}$ and $z_{j}=x^{n_{x}-1}y^{j}$, for all $j\in\{1,2,\ldots,n_{y}\}$. Since $z_{0}=x^{n_{x}-1}\neq 0$ and $z_{n_{y}}=x^{n_{x}-1}y^{n_{y}}=0$, there is a greatest index $k\in\{0,1,\ldots,n_{y}-1\}$ with $z_{k}\neq 0$. Then
	$$
	xz_{k}=x^{n_{x}}y^{k}=0 \quad\text{and}\quad z_{k}y=z_{k+1}=0
	$$
	and $z_{k}y=0$ follows from the maximality of $k$. Since $xz_{k}=0$ with $x\neq 0$, the element $z_{k}$ belongs to $Z(S)^{*}$. If $z_{k}=x$, then $z_{k}y=0$ yields $xy=0$, and if $z_{k}=y$, then $xz_{k}=0$ yields $xy=0$. Both contradict $xy\neq 0$. Hence, $z_{k}\notin\{x,y\}$, and
	$$
	x\to z_{k}\to y
	$$
	is a directed path of length $2$ in $\zdg(S)$. Thus, $\overrightarrow{d}(x,y)\leq 2$. 
\end{proof}

\begin{corollary}\label{cor:nildiam}
	If $Z(S)=\Nil(S)\neq\{0\}$, then $\zdg(S)$ is connected. If $|Z(S)^{*}|\geq 2$, then
	$$
	\Diam(\zdg(S))\leq 2
	$$
\end{corollary}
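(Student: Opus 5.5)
The corollary follows directly from Theorem \ref{thm:nildist}; the only care needed is with the definitions of connectedness and diameter.

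Assume $Z(S)=\Nil(S)\neq\{0\}$. Then the vertex set of $\zdg(S)$ is $Z(S)^{*}=\Nil(S)^{*}$, so every vertex is a nonzero nilpotent element.

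\emph{Connectedness.} Take distinct vertices $x,y$. Both lie in $\Nil(S)^{*}$, so Theorem \ref{thm:nildist} gives $\overrightarrow{d}(x,y)\leq 2<\infty$. Hence there is a directed path from $x$ to $y$. Since this holds for every ordered pair of distinct vertices, $\zdg(S)$ is connected in the sense defined in Section \ref{sec:prelim}.

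If $|Z(S)^{*}|=1$, the condition on distinct vertices is vacuous, so the digraph is trivially connected. This case can arise: for instance, a single vertex $x$ with $x^{2}=0$ and no other nonzero zero divisors. This is why the diameter claim is stated only under the extra hypothesis.

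\emph{Diameter.} Now assume $|Z(S)^{*}|\geq 2$. By definition, $\Diam(\zdg(S))$ is the maximum of $\overrightarrow{d}(x,y)$ over ordered pairs of distinct vertices, and this set of pairs is nonempty. Every such pair consists of nonzero nilpotent elements, so Theorem \ref{thm:nildist} bounds each distance by $2$. Taking the maximum gives $\Diam(\zdg(S))\leq 2$.

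Note that the hypothesis $Z(S)=\Nil(S)$ is what places every vertex in $\Nil(S)^{*}$. It is also automatic in one direction: a nonzero nilpotent $x$ with $n_x\geq 2$ satisfies $x\cdot x^{n_x-1}=0$ with $x^{n_x-1}\neq 0$, so $x\in Z(S)$. Either way, the theorem applies to all pairs, so no step is a genuine obstacle.
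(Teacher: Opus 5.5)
Your proof is correct and follows the paper's argument: when $Z(S)=\Nil(S)$, every vertex lies in $\Nil(S)^{*}$, so Theorem~\ref{thm:nildist} bounds every directed distance by $2$, which gives both connectedness and $\Diam(\zdg(S))\leq 2$. Your explicit treatment of the one-vertex case, where connectedness holds vacuously, spells out a point the paper's one-line proof leaves implicit.
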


\begin{proof}
	If $Z(S)=\Nil(S)$, then for any distinct $x,y\in Z(S)^{*}$, $\overrightarrow{d}(x,y)\leq 2$, by Theorem \ref{thm:nildist}. Thus, $\Diam(\zdg(S))\leq 2$.
\end{proof}

Since every ring is a semigroup with zero under its multiplicative operation, all results established for semigroups are also valid for rings. Therefore, every ring $R$ with $Z(R)=\Nil(R)\neq\{0\}$ is connected with $\Diam(\zdg(R))\leq 2$ whenever $|Z(R)^{*}|\geq 2$. 

\begin{theorem}\label{thm:sinks}
	The vertex $x$ is a sink (source) of $\ezdg(S)$ if and only if it is a sink (source) of $\zdg(S)$.
\end{theorem}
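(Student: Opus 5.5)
Since $\zdg(S)$ is a spanning subdigraph of $\ezdg(S)$ (Proposition \ref{propsub}), both digraphs have the same vertex set. Every vertex also has out-degree and in-degree in $\zdg(S)$ at most those in $\ezdg(S)$. So two things remain to show. First, if $x$ has an out-arc (resp.\ in-arc) in $\ezdg(S)$, then it already has one in $\zdg(S)$. Second, an in-arc of a sink of $\ezdg(S)$ yields an in-arc in $\zdg(S)$. The source statement will follow by applying the sink statement to $S^{\mathrm{op}}$. Arcs of $\zdg(S^{\mathrm{op}})$ and $\ezdg(S^{\mathrm{op}})$ are exactly the reversed arcs of $\zdg(S)$ and $\ezdg(S)$, because $x^n\circ y^m=y^mx^n$ and powers in $S$ and $S^{\mathrm{op}}$ agree. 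Hence sources of $S$ are sinks of $S^{\mathrm{op}}$. I will therefore treat only sinks.

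The main tool is the "last nonzero term" trick from the proof of Theorem \ref{thm:nildist}. Suppose there is an arc $y\to x$ in $\ezdg(S)$, so $y^nx^m=0$ with $y^n\neq 0\neq x^m$. Set $w_j=y^nx^j$ for $0\le j\le m$. Then $w_0\neq 0$ and $w_m=0$, so there is a largest $k<m$ with $w_k\neq 0$, and it satisfies $w_kx=0$. Hence $w_k\in Z(S)^*$, and $w_k\to x$ is an arc of $\zdg(S)$ unless $w_k=x$.

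Dually, suppose there is an arc $x\to y$ in $\ezdg(S)$, so $x^ny^m=0$. Take $t_i=x^iy^m$ for $1\le i\le n$. If $t_1=0$, then $x\to y^m$ is an arc whenever $y^m\neq x$. Otherwise, a largest $i$ with $t_i\neq 0$ gives $xt_i=0$, and hence the arc $x\to t_i$ whenever $t_i\neq x$. This settles both directions in the generic case.

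The obstacle is the degenerate coincidences $w_k=x$, $t_i=x$ or $y^m=x$. In each of them $x^2=0$ is forced. For instance, $t_i=x$ gives $x^2=x\,x^iy^m=t_{i+1}=0$, and $w_k=x$ gives $x^2=w_kx=0$. Hence $m=1$ (resp.\ $n=1$) and the coincidence has the form $x=y^r$ for a power of the other vertex. In particular that other vertex $y$ is nilpotent. I will handle this case by exploiting nilpotency of $y$ directly. If $x=y^r$ with $y\neq x$ (so $r\ge 2$), then for any suitable $b$ we get $xy^b=y^{r+b}=0$ and $y^bx=0$. Choosing $b=n_y-1$ yields arcs in both directions between $x$ and a nonzero power of $y$.

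In the sink direction this contradicts $x$ having out-degree zero in $\ezdg(S)$, since it gives an out-arc from $x$. In the converse direction it produces an out-arc of $x$ in $\zdg(S)$ through some $y^b\neq x$. To get $y^b\neq x$, I will pick $b$ as a power of $y$ different from $y^r$. Distinct exponents below $n_y$ give distinct powers, since an equality $y^s=y^t$ with $s<t<n_y$ forces $y^s=0$. So whenever $n_y\ge 3$, some $b\in\{r,\dots,n_y-1\}$ with $b\neq r$ works, or else $y$ itself works. Checking this small-exponent bookkeeping carefully, in particular ruling out $r=n_y-1$ leaving no admissible $b$ and then using $y$ itself, is the step I expect to require the most care.
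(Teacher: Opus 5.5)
Your proposal is correct and follows essentially the paper's route. You use the spanning-subdigraph inclusion to reduce the problem to transferring arcs, observe that the only obstruction is the coincidence $x=y^{r}$ with $r\ge 2$ and $x^{2}=0$, and in the sink-of-$\ezdg(S)$ direction dispose of it with the out-arc $x\to y$ coming from $xy^{r}=x^{2}=0$, as the paper does; your reduction of sources to sinks via $S^{\mathrm{op}}$ is the paper's ``symmetric argument'' made explicit. The only real difference is in the converse direction, where you exhibit an explicit power $y^{b}\neq x$ with $xy^{b}=0$, whereas the paper derives a contradiction from the sink property applied to $y^{2b-1}$ and $y^{b-1}$. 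Your bookkeeping there is sound: $2r\ge n_{y}$ and $r<n_{y}$ give $b=r+1$ when $r\le n_{y}-2$, and $b=1$ when $r=n_{y}-1$.
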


\begin{proof}
	($\Rightarrow:$) Let $r$ be a sink of $\ezdg(S)$. Since $\zdg(S)$ is a spanning subdigraph of $\ezdg(S)$, the out-degree of $r$ in $\zdg(S)$ is zero, and it remains to prove that the in-degree of $r$ in $\zdg(S)$ is positive. Suppose that $r$ is isolated in $\zdg(S)$, i.e., $rs=0$ implies $s\in\{0,r\}$ and $sr=0$ implies $s\in\{0,r\}$, for all $s\in S$. Since $r\in Z(S)^{*}$, there exist a $y \in Z(S)^{*}$ such that $r \in Ann_r(y)$ or $r \in Ann_l (y)$. However the only remaining possibility is $y =r$, thus $r^2=0$. Since the in-degree of $r$ in $\ezdg(S)$ is positive, there exist an arc $y\to r$, implying there exist $a,b \in \mathbb{Z}^+$ such that $y^{b}r^{a}=0$ with $r^{a}\neq 0$ and $y^{b}\neq 0$. Hence $a=1$. Consequently, $y^{b}r=0$, and $y^{b}=r$ since $r$ is isolated. As a result
	$$
	ry^{b}=r^{2}=0
	$$
	is obtained with $r\neq 0$ and $y^{b}=r\neq 0$. Then, $r\to y$ is an arc in $\ezdg(S)$, which contradicts the fact that out-degree of $r$ in $\ezdg(S)$ is zero. Hence, $r$ is not isolated in $\zdg(S)$, then there exist an arc whose head is $r$. Thus, $r$ is a sink in $\zdg(S)$. The converse statement for sources follows by a symmetric argument.	
	
	($\Leftarrow:$) Let $r$ be a sink of $\zdg(S)$. If $rs=0$ with $s\notin\{0,r\}$, then $s\in Z(S)^{*}$. Therefore, $r\to s$ is an arc in $\zdg(S)$. This is a contradiction since $r$ is a sink. Thus, $rs=0$ implies $s\in\{0,r\}$, for all $s\in S$. Suppose that $r\to y$ is an arc in $\ezdg(S)$ for some vertex $y\neq r$. Then, there exist  $a,b\in\ZP$ such that $r^{a}y^{b}=0$, $r^{a}\neq 0$, and $y^{b}\neq 0$ where $a$ and $b$ are least. If $a\geq 2$, then $r(r^{a-1}y^{b})=0$ yields $r^{a-1}y^{b}=r$ since $r$ is a sink and $r^{a-1}y^{b}=0$ contradicts with minimality of $a$. Multiplying $r^{a-1}y^{b}=r$ on the left by $r$, then $r^{2}=r^{a}y^{b}=0$ is obtained, which contradicts $r^{a}\neq 0$. Hence, $a=1$. Thus, $ry^{b}=0$, so $y^{b}=r$ since $y^{b}\neq 0$. If $b=1$, then $r \to y$ is an arc in $\zdg(S)$, which is a contradiction since $r$ is a sink of $\zdg(S)$. Hence, $b\geq 2$ and $ry\neq 0$. Therefore,
	$$
	y^{b+1}=y^{b}y=ry\neq 0 \text{ and } y^{2b}=r^{2}=0
	$$
	holds. Since $3b-1> 2b$
	$$
	ry^{2b-1}=y^{b}y^{2b-1}=y^{3b-1}=0
	$$
	so $y^{2b-1}\in\{0,r\}$. If $y^{2b-1}=0$, then $ry^{b-1}=y^{b}y^{b-1}=y^{2b-1}=0$ yields $y^{b-1}=r$ since $y^{b-1}\neq 0$, namely  $y^{b-1}=r=y^{b}$. Multiplying by $y$ repeatedly yields $y^{b}=y^{2b}=0$, which contradicts $y^{b}=r\neq 0$, then $y^{2b-1}=r$. Accordingly, $y^{2b-1}=y^{b}$, and multiplying on the left by $y$ yields $0=y^{2b}=y^{b+1}$, which contradicts $y^{b+1}\neq 0$. Since both cases result in a contradiction, it follows that the out-degree of $r$ in $\ezdg(S)$ is zero. The in-degree of $r$ in $\ezdg(S)$ is positive since $\zdg(S)$ is a spanning subdigraph of $\ezdg(S)$. Consequently, $r$ is a sink of $\ezdg(S)$. The converse statement for sources again follows by a symmetric argument.
\end{proof}
%%%%%%%%%%%%%%%%%%%%%%%%%%%%%%%%%%%%%%%%%%%%%%%%%%%%%%%%%%%%%%%%%%%%%%%%%%%%%%%%
\section{The Annihilator Digraph}\label{sec:adg}

This section introduces the annihilator digraph of a semigroup with zero, which is the directed analog of the annihilator graph defined in \cite{b14} for commutative rings and in \cite{aks15} for commutative semigroups, and compares it with $\zdg(S)$ and $\ezdg(S)$.

\begin{definition}\label{def:adg}
	The \textbf{annihilator digraph} of $S$ is the simple digraph $\adg(S)$ with vertex set $Z(S)^{*}$, where for distinct $x,y\in Z(S)^{*}$, $x\to y$ is an arc if and only if
	$$
	\Annl(xy)\setminus\big(\Annl(x)\cup\Annl(y)\big)\neq\emptyset
	$$ or
	$$
	\Annr(xy)\setminus\big(\Annr(x)\cup\Annr(y)\big)\neq\emptyset
	$$
\end{definition}
Just like with the zero-divisor digraph and the extended zero-divisor digraph, if there are no zero divisor elements in the semigroup, there are no vertices in the annihilator digraph. Moreover, since each of $\zdg(S)$, $\ezdg(S)$, and $\adg(S)$ have the same vertex set, all of the above theorems hold even when the graphs are edgeless.
\begin{remark}\label{rem:adg-comm}
	Let $S$ be a commutative semigroup with zero. Then for all $x,y \in S$, $\Annl(x)=\Annr(x)=\Ann(x)$ and $\Ann(x)\cup\Ann(y)\subseteq\Ann(xy)$. Thus, there is an arc $x\to y$ in $\adg(S)$ if and only if $\Ann(xy)\neq\Ann(x)\cup\Ann(y)$, if and only if there is an arc $y\to x$. Hence, for commutative semigroups, $\adg(S)$ is a symmetric digraph. If $S$ is noncommutative, then $\Annl(y)\subseteq\Annl(xy)$ may not hold. Hence, the adjacency is defined by the nonemptiness of the two set differences. 
\end{remark}

\begin{proposition}\label{prop:duality}
	The graphs $\zdg(S^{\mathrm{op}})$, $\ezdg(S^{\mathrm{op}})$, and $\adg(S^{\mathrm{op}})$ are converses of $\zdg(S)$, $\ezdg(S)$, and $\adg(S)$, respectively.
\end{proposition}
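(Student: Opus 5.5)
The plan is to work directly from the definitions, using the fact that $S^{\mathrm{op}}$ has the same underlying set and the same zero as $S$, with product $x\circ y=yx$. First I check that the vertex sets agree. An element $x$ is a left zero divisor of $S^{\mathrm{op}}$ exactly when $x\circ y=yx=0$ for some $y\neq 0$, that is, exactly when $x$ is a right zero divisor of $S$; symmetrically, right zero divisors of $S^{\mathrm{op}}$ are the left zero divisors of $S$. Hence $Z(S^{\mathrm{op}})=Z(S)$, and all six digraphs share the vertex set $Z(S)^{*}$. It then suffices to show that, for distinct $x,y\in Z(S)^{*}$, $x\to y$ is an arc in the digraph of $S^{\mathrm{op}}$ if and only if $y\to x$ is an arc in the corresponding digraph of $S$.

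For $\zdg$ this is immediate: $x\circ y=0$ if and only if $yx=0$. For $\ezdg$ I first note that powers agree in both semigroups, $x^{\circ n}=x^{n}$, since every factor is $x$. Therefore $x^{\circ n}\circ y^{\circ m}=y^{m}x^{n}$, and the nonvanishing conditions $x^{n}\neq 0$ and $y^{m}\neq 0$ are unchanged. So the arc condition for $x\to y$ in $\ezdg(S^{\mathrm{op}})$ is exactly the arc condition for $y\to x$ in $\ezdg(S)$, with the roles of $n$ and $m$ swapped.

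For $\adg$ I record how annihilators transform. Writing $\Annl^{\mathrm{op}}$ and $\Annr^{\mathrm{op}}$ for the annihilators computed in $S^{\mathrm{op}}$, we have $\Annl^{\mathrm{op}}(z)=\{s: s\circ z=0\}=\{s: zs=0\}=\Annr(z)$, and likewise $\Annr^{\mathrm{op}}(z)=\Annl(z)$. Applying these with $z=x\circ y=yx$, the first condition defining an arc $x\to y$ in $\adg(S^{\mathrm{op}})$ becomes
$$\Annr(yx)\setminus\big(\Annr(x)\cup\Annr(y)\big)\neq\emptyset,$$
and the second becomes the analogous statement with $\Annl$. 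Since the unions are symmetric in $x$ and $y$, these are precisely the two conditions defining the arc $y\to x$ in $\adg(S)$, now listed in the opposite order. Because the definition is a disjunction, the order of the two conditions does not matter, and the equivalence follows.

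No step is a real obstacle. The only point needing care is the $\adg$ case: I must check that the left/right swap of the annihilators and the reversal of the product $xy\mapsto yx$ together give exactly the arc $y\to x$, and I should not misread the result as the arc $x\to y$. Simplicity and distinctness of the vertices are preserved automatically, since the vertex set is the same.
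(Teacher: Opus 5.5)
Your proposal is correct and takes essentially the same route as the paper: you check that the vertex sets agree and that powers are unchanged, then translate each arc condition directly under $x\circ y=yx$. The only cosmetic difference is that you get the biconditional at once from $\Annl^{\mathrm{op}}=\Annr$ and $\Annr^{\mathrm{op}}=\Annl$. The paper instead chases a witness $z$ in one direction only and leaves the converse implicit; it follows from $(S^{\mathrm{op}})^{\mathrm{op}}=S$.
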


\begin{proof}
	The zero element and the powers of elements of $S^{\mathrm{op}}$ coincide with those of $S$, and $Z(S)^* = Z(S^{\mathrm{op}})^*$. Assume $x,y \in Z(S)^*$ are distinct. First, let $x \to y$ be an arc of $\zdg(S)$. Then, $xy=0=y \circ x$. Thus, $y \to x$ is an arc of $\zdg(S^{\mathrm{op}})$. If $x \to y$ is an arc of $\ezdg(S)$ then there exist positive integers $m,n \in \mathbb{Z}^+$ such that $x^my^n=0$ with $x^m \neq 0$ and $y^n \neq 0$. Hence, $y^n \circ x^m = 0$, this implies that $y \to x$ is an arc of $\ezdg(S^{\mathrm{op}})$. Lastly, let $x\to y$ be an arc of $\adg(S)$. Without the loss of generality, let $z \in \Annr(xy)\setminus\big(\Annr(x)\cup\Annr(y)\big)$. Then, $(xy)z=0$ with $xz \neq 0$ and $yz \neq 0$. Herefrom, $z \circ (y \circ x)=0$ with $z \circ x \neq 0$ and $z \circ y \neq 0$. This requires that $z \in \Annl(y \circ x)\setminus\big(\Annl(y)\cup\Annl(x)\big)$ and $y \to x$ is an arc of $\adg(S^{\mathrm{op}})$.
\end{proof}

\begin{definition}
	The graphs defined below have the vertex set $Z(S)^*$.
	\begin{enumerate}
		\item $x \to y$ is an arc of $\adg^{l}(S)$ if and only if $\Annl(xy)\setminus\big(\Annl(x)\cup\Annl(y)\big)\neq\emptyset$.
		\item $x \to y$ is an arc of $\adg^{r}(S)$ if and only if $\Annr(xy)\setminus\big(\Annr(x)\cup\Annr(y)\big)\neq\emptyset$.
		\item $\adg^{\wedge}(S)=\adg^{l}(S)\cap\adg^{r}(S)$.
	\end{enumerate}
\end{definition}

\begin{remark}\label{prop:variants}
	If $S$ is commutative, then $\adg^{l}(S)=\adg^{r}(S)=\adg^{\wedge}(S)=\adg(S)$. Otherwise,
	$\adg^{l}(S)=\adg^{r}(S)$ may not hold. Indeed, let $S=T_{3}(\mathbb{Z}_{2})$, where $T_{3}(\mathbb{Z}_{2})$ denotes the ring of $3\times 3$ upper triangular matrices over $\mathbb{Z}_{2}$, and let $$x=\begin{bmatrix}
		\overline{0} & \overline{0} & \overline{0} \\
		\overline{0} & \overline{1} & \overline{0} \\
		\overline{0} & \overline{0} & \overline{1}
	\end{bmatrix} \text{ and } y=\begin{bmatrix}
		\overline{0} & \overline{0} & \overline{1} \\
		\overline{0} & \overline{0} & \overline{0} \\
		\overline{0} & \overline{0} & \overline{1}
	\end{bmatrix}$$ Clearly, $x,y\in Z(S)^*$ since $xe_{11}=[\overline{0}]=ye_{11}$ where $e_{ij}$ represents the matrix whose $(i,j)$-entry is $\overline{1}$ and all other entries are $\overline{0}$. Then $xy=e_{33}$. Furthermore, \\ 
	$$z=\begin{bmatrix}
		\overline{1} & \overline{1} & \overline{0} \\
		\overline{0} & \overline{0} & \overline{0} \\
		\overline{0} & \overline{0} & \overline{0}
	\end{bmatrix} \in \Annl(xy)\setminus\big(\Annl(x)\cup\Annl(y)\big)$$ because
	$z(xy)=ze_{33}=[\overline{0}], \text{ } zx=e_{12}\neq [\overline{0}],\text{ } zy=e_{13}\neq [\overline{0}]$. Hence, $x\to y$ is an arc in $\adg^{l}(S)$. Moreover, let $w\in\Annr(xy)=\Annr(e_{33})$. Then third row of $w$ consists of only zeros. Thus, $yw=[\overline{0}]$, namely $w\in \Annr(y)$. Thus, $\Annr(xy)\setminus (\Annr(y)\cup \Annr(x))=\emptyset$. Therefore, $x\to y$ is not an arc of $\adg^{r}(S)$. 
\end{remark}

\begin{theorem}\label{thm:zdg-in-adg}
	Let $Z(S)\neq S$. Then $\zdg(S)$ is a spanning subdigraph of $\adg^{\wedge}(S)$. In particular, $\zdg(S)$ is a spanning subdigraph of $\adg(S)$.
\end{theorem}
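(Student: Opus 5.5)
Since all three digraphs have vertex set $Z(S)^{*}$, it suffices to show that every arc of $\zdg(S)$ is an arc of both $\adg^{l}(S)$ and $\adg^{r}(S)$. The final sentence then follows, because $\adg^{\wedge}(S)\subseteq\adg^{l}(S)\subseteq\adg(S)$ as arc sets. So fix distinct $x,y\in Z(S)^{*}$ with $xy=0$. Then $xy=0$ gives $\Annl(xy)=\Annr(xy)=S$. The task reduces to finding some $s\in S$ with $sx\neq 0$ and $sy\neq 0$ (for $\adg^{l}$), and some $t\in S$ with $xt\neq 0$ and $yt\neq 0$ (for $\adg^{r}$).

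The hypothesis $Z(S)\neq S$ supplies these witnesses. Choose $u\in S\setminus Z(S)$; note $u\neq 0$, since $0\in Z(S)$ as soon as $Z(S)^{*}\neq\emptyset$ (if $0\cdot x=0$ with $x\neq 0$). Because $u$ is neither a left nor a right zero divisor, $us=0$ or $su=0$ forces $s=0$. As $x,y\neq 0$, we get $ux\neq 0$, $uy\neq 0$, $xu\neq 0$ and $yu\neq 0$. Hence
$$u\in \Annl(xy)\setminus\big(\Annl(x)\cup\Annl(y)\big)\quad\text{and}\quad u\in \Annr(xy)\setminus\big(\Annr(x)\cup\Annr(y)\big).$$
Therefore $x\to y$ is an arc of both $\adg^{l}(S)$ and $\adg^{r}(S)$, and so of $\adg^{\wedge}(S)$.

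The only delicate point is the precise reading of the zero-divisor definition. We need $u\notin Z(S)$ to give $us\neq 0$ and $su\neq 0$ for every nonzero $s$. This holds because the preliminaries define $u$ to be a left zero divisor exactly when $uy=0$ for some $y\in S^{*}$, and a right zero divisor exactly when $yu=0$ for some $y\in S^{*}$; so $u\notin Z(S)$ says precisely that no nonzero element is annihilated by $u$ on either side. Beyond this, the argument is a direct verification and needs no earlier result.
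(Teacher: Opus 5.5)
Your proposal is correct and is essentially the paper's proof: you pick an element of $S\setminus Z(S)$, note it lies in $\Annl(xy)\cap\Annr(xy)$ because $xy=0$, and use the fact that it is not a zero divisor to show that all four products with $x$ and $y$ are nonzero. Your version is slightly more explicit than the paper's, which checks only $sx,sy\neq 0$ and leaves the right-sided products $xs,ys\neq 0$ to ``similarly''.
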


\begin{proof}
	For distinct $x,y \in  Z(S)^*$, let $x\to y$ be an arc in $\zdg(S)$. Since $Z(S)\neq S$, there is an $s\in S\setminus Z(S)$. Then $s(xy)=s0=0$, so $s\in\Annl(xy)$. If $sx=0$, then $s$ is a left zero divisor since $x\neq 0$, which contradicts $s\notin Z(S)$. Accordingly, $sx\neq 0$, and similarly $sy\neq 0$. Hence,
	$s\in\Annl(xy)\setminus\big(\Annl(x)\cup\Annl(y)\big)
	\text{ and }s\in\Annr(xy)\setminus\big(\Annr(x)\cup\Annr(y)\big)$
	Consequently, $x\to y$ is an arc in $\adg^{\wedge}(S)$.
\end{proof}

\begin{corollary}\label{cor:monoid-adg}
	Let $S$ be a monoid with zero. Then $\zdg(S)$ is a spanning subdigraph of $\adg(S)$.
\end{corollary}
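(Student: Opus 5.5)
The corollary should follow directly from Theorem \ref{thm:zdg-in-adg}. That theorem needs only the hypothesis $Z(S)\neq S$, so the plan is to exhibit an element of $S$ that is not a zero divisor. The natural candidate is the identity $1$ of the monoid.

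First I check that $1\notin Z(S)$. If $1\cdot y=0$ for some $y\in S^{*}$, then $y=1y=0$, which is a contradiction. If $y\cdot 1=0$ for some $y\in S^{*}$, then again $y=y1=0$. So $1$ is neither a left nor a right zero divisor, and hence $1\in S\setminus Z(S)$.

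One degenerate case needs a comment: if $1=0$, then $S=\{0\}$. In that case $Z(S)^{*}=\emptyset$, both digraphs have no vertices, and the statement holds trivially. Otherwise $1\neq 0$, the argument above applies, and $Z(S)\neq S$.

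Theorem \ref{thm:zdg-in-adg} then shows that $\zdg(S)$ is a spanning subdigraph of $\adg^{\wedge}(S)$. Every arc of $\adg^{\wedge}(S)=\adg^{l}(S)\cap\adg^{r}(S)$ is in particular an arc of $\adg^{l}(S)$, and so it satisfies the first disjunct in Definition \ref{def:adg}. Hence $\adg^{\wedge}(S)$ is a spanning subdigraph of $\adg(S)$, since all these digraphs share the vertex set $Z(S)^{*}$. By transitivity, $\zdg(S)$ is a spanning subdigraph of $\adg(S)$.

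No step here is a genuine obstacle; the only point requiring care is the trivial monoid $1=0$ handled above.
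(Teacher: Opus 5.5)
Your proposal is correct and follows the paper's route. The paper's proof is the single remark that $Z(S)\neq S$, so Theorem \ref{thm:zdg-in-adg} applies. You additionally justify $Z(S)\neq S$ by showing $1\notin Z(S)$, treat the trivial monoid separately, and spell out why $\adg^{\wedge}(S)$ is a spanning subdigraph of $\adg(S)$; the paper leaves these points implicit.
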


\begin{proof}
	Since $Z(S)\neq S$, Theorem \ref{thm:zdg-in-adg} applies.
\end{proof}

\begin{example}\label{ex:null}
	The hypothesis $Z(S)\neq S$ in Theorem \ref{thm:zdg-in-adg} cannot be omitted. For example, let $S=\{0,x,y\}$ be the null semigroup for which $\zdg(S)$ is the complete digraph on two vertices. Whereas $\Annl(s)=\Annr(s)=S$, for all $s\in S$. Namely, $\adg(S)$ has no arcs, and $\zdg(S)\not\subseteq\adg(S)$. Both digraphs are drawn in Figure \ref{fig:null}.
	\begin{figure}[htbp!]
		$$\begin{tikzpicture}[>=stealth, semithick]
			% Noktalar? tan?mlayal?m
			\coordinate (x) at (0,0);
			\coordinate (y) at (2.2,0);   % Sa? üst nokta  % Sol üst nokta
			
			% Do?ru parçalar?n? çiz ve uç noktalar?na etiket ekle
			\fill (x) circle (1.3pt) node[below] {$x$};
			\fill (y) circle (1.3pt) node[below] {$y$};
			
			% Noktalar? dolu daire ile vurgula
			\draw[shorten >=0.1cm,shorten <=0.01cm,->] (x) to[bend left=30] (y);
			\draw[shorten >=0.1cm,shorten <=0.01cm,->] (y) to[bend left=30] (x);
			\node[draw=none] at (1.1,-1.1) {$\zdg(S)$};
			\begin{scope}[xshift=5.2cm]
				\fill (0,0) circle (1.3pt) node[below] {$x$};
				\fill (2.2,0) circle (1.3pt) node[below] {$y$};
				\node[draw=none] at (1.1,-1.1) {$\adg(S)$};
			\end{scope}
		\end{tikzpicture} $$
		\caption{The digraphs $\zdg(S)$ and $\adg(S)$ of $S$.}
		\label{fig:null}
	\end{figure}
\end{example}

By Proposition \ref{propsub}, the digraph $\zdg(S)$ is always a spanning subdigraph of $\ezdg(S)$. Similarly, by Theorem \ref{thm:zdg-in-adg}, the digraph $\zdg(S)$ is also a spanning subdigraph of $\adg(S)$, provided that $Z(S)\neq S$. However, the digraph $\ezdg(S)$ is not necessarily a spanning subdigraph of $\adg(S)$. The following example demonstrates that this inclusion does not hold in general.

\begin{example}\label{ex:t3}
	Let $S=T_{3}(\mathbb{Z}_{2})$, and let
	$$
	x=\begin{bmatrix}
		\overline{1} & \overline{0} & \overline{0} \\
		\overline{0} & \overline{0} & \overline{1} \\
		\overline{0} & \overline{0} & \overline{0}
	\end{bmatrix} \quad\text{and}\quad y=\begin{bmatrix}
		\overline{0} & \overline{1} & \overline{0} \\
		\overline{0} & \overline{0} & \overline{0} \\
		\overline{0} & \overline{0} & \overline{1}
	\end{bmatrix}
	$$
	Then $xy=\begin{bmatrix}
		\overline{0} & \overline{1} & \overline{0} \\
		\overline{0} & \overline{0} & \overline{1} \\
		\overline{0} & \overline{0} & \overline{0}
	\end{bmatrix}\neq [\overline{0}]$, and
	$$
	x^{2}=e_{11},\qquad y^{2}=e_{33},\qquad x^{2}y^{2}=e_{11}e_{33}=[\overline{0}]
	$$
	with $x^{2}\neq [\overline{0}]$ and $y^{2}\neq [\overline{0}]$. Moreover, $x(xy^{2})=x^{2}y^{2}=[\overline{0}]$ with $xy^{2}=xe_{33}=e_{23}\neq [\overline{0}]$, and $(x^{2}y)y=[\overline{0}]$ with $x^{2}y=e_{11}y=e_{12}\neq [\overline{0}]$. Hence, $x,y\in Z(S)^{*}$, and $x\to y$ is an arc in $\ezdg(S)$.
	
	Let $z=\begin{bmatrix}
		\overline{a} & \overline{b} & \overline{c} \\
		\overline{0} & \overline{d} & \overline{e} \\
		\overline{0} & \overline{0} & \overline{f}
	\end{bmatrix}\in \Annl(xy)$. The product $z(xy)=\begin{bmatrix}
		\overline{0} & \overline{a} & \overline{b} \\
		\overline{0} & \overline{0} & \overline{d} \\
		\overline{0} & \overline{0} & \overline{0}
	\end{bmatrix}=[\overline{0}]$. Thus, $\overline{a}=\overline{b}=\overline{d}=\overline{0}$. From here,
	$$
	zx=\begin{bmatrix}
		\overline{a} & \overline{0} & \overline{b} \\
		\overline{0} & \overline{0} & \overline{d} \\
		\overline{0} & \overline{0} & \overline{0}
	\end{bmatrix}=[\overline{0}]
	$$
	This implies that $z\in\Annl(x)$, and hence $\Annl(xy)\setminus(\Annl(x)\cup\Annl(y))=\emptyset$. Similarly, if $w\in\Annr(xy)$, then $w\in\Annr(y)$. As a result, $\Annr(xy)\setminus(\Annr(x)\cup\Annr(y))=\emptyset$. Consequently, $x\to y$ is not an arc in $\adg(S)$, and
	$\ezdg(S)$ is not a subdigraph of $\adg(S)$.
	Note that $S$ is a finite monoid. Then, by Corollary \ref{cor:monoid-adg}, $\zdg(S)$ is a spanning subdigraph of $\adg(S)$.
\end{example}

\begin{proposition}\label{prop:onesided}
	Let $x,y\in Z(S)^{*}$ and $x\neq y$.
	\begin{enumerate}
		\item If $x^{n}y=0$ for some $n\geq 2$ with $x^{n}\neq 0$ and $x^{n-1}y\neq 0$, then $x\to y$ is an arc in $\adg(S)$.
		\item If $xy^{m}=0$ for some $m\geq 2$ with $y^{m}\neq 0$ and $xy^{m-1}\neq 0$, then $x\to y$ is an arc in $\adg(S)$.
	\end{enumerate}
\end{proposition}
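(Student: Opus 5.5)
For part (1) I will show that $x^{n-1}$ lies in $\Annl(xy)\setminus\big(\Annl(x)\cup\Annl(y)\big)$. By Definition \ref{def:adg}, that single witness is enough to give the arc $x\to y$ in $\adg(S)$. The hypothesis $n\geq 2$ is used exactly here: it ensures that $z=x^{n-1}$ is a genuine element of $S$, a positive power of $x$. The vertices $x,y$ are distinct elements of $Z(S)^{*}$ by assumption, so only the annihilator condition needs checking.

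The three verifications follow from associativity.
\begin{itemize}
\item Since $z(xy)=x^{n-1}xy=x^{n}y=0$, we have $z\in\Annl(xy)$.
\item Since $zx=x^{n}\neq 0$, we have $z\notin\Annl(x)$.
\item Since $zy=x^{n-1}y\neq 0$, we have $z\notin\Annl(y)$.
\end{itemize}
Hence the left-annihilator set difference is nonempty, and $x\to y$ is an arc.

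For part (2) I will argue symmetrically with right annihilators, taking $w=y^{m-1}$; again $m\geq 2$ ensures $w$ is a positive power of $y$.
\begin{itemize}
\item Since $(xy)w=xy^{m}=0$, we have $w\in\Annr(xy)$.
\item Since $xw=xy^{m-1}\neq 0$, we have $w\notin\Annr(x)$.
\item Since $yw=y^{m}\neq 0$, we have $w\notin\Annr(y)$.
\end{itemize}
So $w\in\Annr(xy)\setminus\big(\Annr(x)\cup\Annr(y)\big)$, and $x\to y$ is an arc of $\adg(S)$.

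Alternatively, part (2) can be deduced from part (1) applied in $S^{\mathrm{op}}$ together with Proposition \ref{prop:duality}, though the direct check is just as short. There is no real obstacle here. The only points needing care are keeping the multiplication order straight in the noncommutative setting and using $n,m\geq 2$ so that the chosen witness is a power of the element.
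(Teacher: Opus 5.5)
Your proposal is correct and matches the paper's proof exactly: the paper uses the witness $x^{n-1}\in\Annl(xy)\setminus\big(\Annl(x)\cup\Annl(y)\big)$ for part (1) and $y^{m-1}\in\Annr(xy)\setminus\big(\Annr(x)\cup\Annr(y)\big)$ for part (2). The only difference is that you spell out the nonmembership checks $x^{n-1}x=x^{n}\neq 0$ and $y\,y^{m-1}=y^{m}\neq 0$, which the paper leaves to ``by hypothesis.''
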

\begin{proof}
	Let $x,y\in Z(S)^{*}$ and $x\neq y$.	
	\begin{enumerate}
		\item Let $n\geq 2$. Then, $x^{n-1}(xy)=x^{n}y=0$ holds. Therefore, by hypothesis $x^{n-1}\in\Annl(xy)\setminus(\Annl(x)\cup\Annl(y))$.
		\item Let $m\geq 2$ and $y^{m}\neq 0$. Since $0=xy^{m}=(xy)y^{m-1}$, $y^{m-1}\in\Annr(xy)\setminus(\Annr(x)\cup\Annr(y))$.
	\end{enumerate}
\end{proof}

\begin{theorem}\label{thm:2abs}
	If $abc=0$ implies $ab=0$, $bc=0$, or $ac=0$, for all $a,b,c\in S$, then
	\begin{enumerate}
		\item $\adg(S)$ is a subdigraph of $\zdg(S)$.
		\item $\ezdg(S)=\zdg(S)$.
	\end{enumerate}
\end{theorem}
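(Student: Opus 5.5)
The plan is to use the hypothesis as a three-factor splitting rule. Whenever a product of three factors vanishes, it forces one of the three pairwise products to vanish, and in every case below the non-vanishing conditions supplied by the arc definitions will rule out two of those three options.

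\textbf{Part 1.} Let $x\to y$ be an arc of $\adg(S)$ and take a witness $z$. First suppose $z\in\Annl(xy)\setminus(\Annl(x)\cup\Annl(y))$, so that $zxy=0$ while $zx\neq 0$ and $zy\neq 0$. Apply the hypothesis with $a=z$, $b=x$, $c=y$. It gives $zx=0$, $xy=0$, or $zy=0$. The first and third are excluded, so $xy=0$. Now suppose instead $z\in\Annr(xy)\setminus(\Annr(x)\cup\Annr(y))$. Then $xyz=0$ with $xz\neq0$ and $yz\neq0$. Taking $a=x$, $b=y$, $c=z$ again forces $xy=0$. In both cases $x\to y$ is an arc of $\zdg(S)$. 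Since the vertex sets coincide, $\adg(S)$ is a subdigraph of $\zdg(S)$.

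\textbf{Part 2.} By Proposition \ref{propsub} it suffices to show that every arc of $\ezdg(S)$ lies in $\zdg(S)$. Let $x^{n}y^{m}=0$ with $x^{n}\neq0$ and $y^{m}\neq0$. Note that $x^{k}\neq0$ for all $k\le n$ and $y^{k}\neq 0$ for all $k\le m$, since otherwise a higher power would vanish.

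I will lower the exponents one at a time. Suppose $x^{a}y^{b}=0$ with $a\ge2$, $a\le n$ and $b\le m$. Write $x^{a}y^{b}=x\cdot x^{a-1}\cdot y^{b}$ and apply the hypothesis with $a=x$, $b=x^{a-1}$, $c=y^{b}$. This yields one of $x^{a}=0$, $x^{a-1}y^{b}=0$, or $xy^{b}=0$. The first is impossible. Each of the other two gives a relation $x^{a'}y^{b}=0$ with $1\le a'<a$. Iterating, we reach $xy^{b}=0$ with $b=m$.

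Next, if $xy^{b}=0$ with $b\ge2$, write it as $x\cdot y^{b-1}\cdot y$. The hypothesis gives $xy^{b-1}=0$, $y^{b}=0$, or $xy=0$. The middle option is excluded, so the exponent of $y$ drops. Iterating, we reach $xy=0$. Hence $x\to y$ is an arc of $\zdg(S)$, and therefore $\ezdg(S)=\zdg(S)$.

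The only delicate point is the bookkeeping in Part 2: at each reduction step we must confirm that the "bad" alternative (a vanishing pure power) is indeed excluded. This follows because every intermediate exponent stays at most the original $n$ or $m$.
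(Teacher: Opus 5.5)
Your proof is correct and follows the paper's argument. Part 1 is identical. In Part 2 you apply the three-factor rule to $x\cdot x^{a-1}\cdot y^{b}$ and then to $x\cdot y^{b-1}\cdot y$, which is exactly what the paper does. The only difference is cosmetic: you run an explicit descent on the exponents, while the paper takes $n,m$ least and derives contradictions with minimality.
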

\begin{proof}
	Let $x,y\in Z(S)^{*}$ and $x\neq y$.
	\begin{enumerate}
		\item Let $x\to y$ be an arc in $\adg(S)$. Then there exists $z\in S$ such that $zxy=0$, $zx\neq 0$, and $zy\neq 0$ or $xyz=0$, $xz\neq 0$, and $yz\neq 0$. If $zxy=0$, $zx\neq 0$, and $zy\neq 0$, by hypothesis $zx=0$, $xy=0$, or $zy=0$ must hold. Consequently, $xy=0$. Similarly $xy=0$ if $xyz=0$, $xz\neq 0$, and $yz\neq 0$. In both cases, $x\to y$ is an arc in $\zdg(S)$.
		\item Let $x\to y$ be an arc in $\ezdg(S)$. Then there exists $m,n\in \mathbb{Z}^{+}$ where $x^{n}y^{m}=0$, $x^{n}\neq 0$, and $y^{m}\neq 0$ such that $n$ and $m$ are least. If $n\geq 2$, from hypothesis $xx^{n-1}=x^{n}=0$, $x^{n-1}y^{m}=0$, or $xy^{m}=0$ is obtained. Hence, $xy^{m}=0$, so $n=1$ by minimality. If $m\geq 2$, from assumption, $xy=0$, $yy^{m-1}=y^{m}=0$, or $xy^{m-1}=0$. Thus, $xy=0$, implying $x\to y$ is an arc in $\zdg(S)$. From Proposition \ref{propsub}, $\zdg(S)$ is a spanning subdigraph of $\ezdg(S)$. Therefore, $\ezdg(S)=\zdg(S)$.
	\end{enumerate}
\end{proof}
\begin{corollary}\label{cor:2abs-equal}
	Let $Z(S)\neq S$, and let $abc=0$ implies $ab=0$, $bc=0$, or $ac=0$, for all $a,b,c\in S$. Then
	$$
	\zdg(S)=\ezdg(S)=\adg(S)
	$$
\end{corollary}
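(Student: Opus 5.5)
This corollary should follow by combining Theorem \ref{thm:2abs} with Theorem \ref{thm:zdg-in-adg}, and nothing new needs to be computed. All three digraphs have the same vertex set $Z(S)^{*}$. So it suffices to show that they have the same arcs, i.e., that each is a spanning subdigraph of the others.

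\textbf{Step 1.} The hypothesis that $abc=0$ implies $ab=0$, $bc=0$, or $ac=0$ for all $a,b,c\in S$ is exactly the hypothesis of Theorem \ref{thm:2abs}. Part (2) of that theorem gives $\ezdg(S)=\zdg(S)$ directly. Part (1) gives that $\adg(S)$ is a subdigraph of $\zdg(S)$. Since the vertex sets agree, every arc $x\to y$ of $\adg(S)$ is an arc of $\zdg(S)$.

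\textbf{Step 2.} We now use the additional assumption $Z(S)\neq S$. By Theorem \ref{thm:zdg-in-adg}, $\zdg(S)$ is a spanning subdigraph of $\adg^{\wedge}(S)$, and hence of $\adg(S)$. Concretely, any $s\in S\setminus Z(S)$ lies in $\Annl(xy)\setminus(\Annl(x)\cup\Annl(y))$ whenever $xy=0$. Thus every arc of $\zdg(S)$ is an arc of $\adg(S)$.

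\textbf{Step 3.} Combining the two inclusions of arc sets over the common vertex set gives $\adg(S)=\zdg(S)$. Together with Step 1 this yields $\zdg(S)=\ezdg(S)=\adg(S)$.

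There is no real obstacle here. The only point to check is that ``subdigraph'' in Theorem \ref{thm:2abs}(1) really compares arc sets on the same vertex set $Z(S)^{*}$, which it does by Definition \ref{def:adg}. Example \ref{ex:null} shows why $Z(S)\neq S$ is needed: it is what guarantees the reverse inclusion $\zdg(S)\subseteq\adg(S)$.
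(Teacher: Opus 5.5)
Your proposal is correct and is essentially the paper's own proof: Theorem \ref{thm:zdg-in-adg} together with Theorem \ref{thm:2abs}(i) gives $\adg(S)=\zdg(S)$, and Theorem \ref{thm:2abs}(ii) gives $\ezdg(S)=\zdg(S)$. Your remark on Example \ref{ex:null} is also apt. The null semigroup satisfies the three-element condition trivially, since all products vanish, yet $\adg(S)\neq\zdg(S)$ there. This shows that the hypothesis $Z(S)\neq S$ cannot be dropped.
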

\begin{proof}
	Theorem \ref{thm:zdg-in-adg} and Theorem \ref{thm:2abs}(i) yield $\adg(S)=\zdg(S)$, and Theorem \ref{thm:2abs}(ii) completes the proof.
\end{proof}
\begin{example}\label{ex:converse-fails}
	The converse of Corollary \ref{cor:2abs-equal} does not hold. Let $S=\{0,1,a,b\}$ be the commutative monoid with identity $1$ and multiplication determined by
	$$
	a^{2}=b,\qquad ab=ba=0,\qquad b^{2}=0
	$$
	Then $Z(S)^{*}=\{a,b\}$, and $ab=0=ba$ yields that $\zdg(S)$ is the complete digraph with two vertices. Moreover, $\zdg(S)=\ezdg(S)=\adg(S)$. But, $a^{3}=a\,a^{2}=ab=0$, whereas $a^{2}=b\neq 0$.
\end{example}
%%%%%%%%%%%%%%%%%%%%%%%%%%%%%%%%%%%%%%%%%%%%%%%%%%%%%%%%%%%%%%%%%%%%%%%%%%%%%%%%
\section{Parameters of the Digraphs $\zdg(S)$, $\ezdg(S)$ and $\adg(S)$}\label{sec:params}
This section bounds the connectedness, diameter, girth, and vertex degrees of the digraphs $\zdg(S)$, $\ezdg(S)$, and $\adg(S)$, with emphasis on the annihilator digraph, and carries the knit degree of \cite{akk11} from commuting graphs to the zero-divisor digraphs. 
\begin{proposition}\label{prop:adg-diam}
	Let $Z(S)\neq S$ and $Z(S)=\Nil(S)\neq\{0\}$.
	If $|Z(S)^{*}|\geq 2$, then $\adg(S)$ is connected and $\Diam(\adg(S))\leq 2$.
\end{proposition}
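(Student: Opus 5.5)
The plan is to reduce everything to the zero-divisor digraph and then transfer the bounds through the inclusion of arcs. The hypothesis $Z(S)=\Nil(S)\neq\{0\}$ with $|Z(S)^{*}|\geq 2$ is precisely what Corollary \ref{cor:nildiam} needs. That corollary gives that $\zdg(S)$ is connected and $\Diam(\zdg(S))\leq 2$. Concretely, for distinct $x,y\in Z(S)^{*}$, both lie in $\Nil(S)^{*}$, so Theorem \ref{thm:nildist} provides either an arc $x\to y$ or a directed path $x\to z_k\to y$ in $\zdg(S)$ with $z_k\notin\{x,y\}$.

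Next I would invoke the extra hypothesis $Z(S)\neq S$. By Theorem \ref{thm:zdg-in-adg}, $\zdg(S)$ is a spanning subdigraph of $\adg(S)$: the two digraphs share the vertex set $Z(S)^{*}$, and every arc of $\zdg(S)$ is also an arc of $\adg(S)$. The witness for each arc is an element $s\in S\setminus Z(S)$, which lies in both $\Annl(xy)\setminus(\Annl(x)\cup\Annl(y))$ and the corresponding right-hand set. Consequently, every directed path in $\zdg(S)$ is a directed path in $\adg(S)$, so $\overrightarrow{d}_{\adg(S)}(x,y)\leq\overrightarrow{d}_{\zdg(S)}(x,y)\leq 2$ for all distinct vertices $x,y$.

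From this bound, $\adg(S)$ is connected in the paper's sense, since a directed path exists between every ordered pair of distinct vertices, and $\Diam(\adg(S))\leq 2$.

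There is no real obstacle here; the only care needed is in checking hypotheses. First, $Z(S)\neq S$ is genuinely needed, as Example \ref{ex:null} shows: the null semigroup satisfies $Z(S)=\Nil(S)$, but its annihilator digraph is arcless. Second, $|Z(S)^{*}|\geq 2$ guarantees that distances between distinct vertices exist, so the diameter statement is not vacuous.
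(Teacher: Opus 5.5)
Your proposal is correct and matches the paper's proof: Corollary \ref{cor:nildiam} gives that $\zdg(S)$ is connected with diameter at most $2$, and Theorem \ref{thm:zdg-in-adg} (using $Z(S)\neq S$) makes $\zdg(S)$ a spanning subdigraph of $\adg(S)$, so directed distances can only decrease. Your side remark is also accurate: the null semigroup of Example \ref{ex:null} shows that the hypothesis $Z(S)\neq S$ cannot be dropped.
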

\begin{proof}
	By Theorem \ref{thm:zdg-in-adg}, $\zdg(S)$ is a spanning subdigraph of $\adg(S)$, and by Corollary \ref{cor:nildiam}, $\zdg(S)$ is connected with diameter at most $2$. Then, $\adg(S)$ is connected and $\Diam(\adg(S))\leq 2$.
\end{proof}
\begin{proposition}\label{prop:adg-wright}
	Let $Z(S)\neq S$, $|Z(S)^{*}|\geq 2$, and every zero divisor of $S$ be a two-sided zero divisor. Then, $\ezdg(S)$ and $\adg(S)$ are connected. Moreover,
	$$
	\Diam(\adg(S))\leq\Diam(\zdg(S))\leq 3
	\quad\text{and}\quad
	\Diam(\ezdg(S))\leq\Diam(\zdg(S))\leq 3
	$$
\end{proposition}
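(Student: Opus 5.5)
The plan is to reduce everything to Wright's result (Proposition \ref{prop2}) together with the two spanning-subdigraph inclusions already available. Since every zero divisor of $S$ is two-sided, Proposition \ref{prop2} gives that $\zdg(S)$ is connected and that the directed distance between any two vertices of $\zdg(S)$ is at most $3$. Because $|Z(S)^{*}|\geq 2$, there are distinct vertices, so $\Diam(\zdg(S))$ is a well-defined finite number with $\Diam(\zdg(S))\leq 3$.

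Next, I will invoke the inclusions. By Proposition \ref{propsub}, $\zdg(S)$ is a spanning subdigraph of $\ezdg(S)$. By Theorem \ref{thm:zdg-in-adg}, which applies because $Z(S)\neq S$, $\zdg(S)$ is a spanning subdigraph of $\adg(S)$. The key elementary observation is this: if $G$ is a spanning subdigraph of $H$, then every directed path of $G$ is a directed path of $H$ on the same vertex set. Hence $\overrightarrow{d}_{H}(x,y)\leq\overrightarrow{d}_{G}(x,y)$ for all distinct vertices $x,y$. Applying this with $G=\zdg(S)$ and $H\in\{\ezdg(S),\adg(S)\}$ shows that every pair of distinct vertices is joined by a directed path in $H$, so $H$ is connected. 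Taking the maximum over pairs then gives $\Diam(H)\leq\Diam(\zdg(S))$. Combining this with the bound from the first step yields both displayed chains of inequalities.

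There is no real obstacle here; the only care needed concerns the hypotheses. First, $Z(S)\neq S$ is used only for the $\adg(S)$ part, since Example \ref{ex:null} shows the inclusion can fail without it. Second, $|Z(S)^{*}|\geq 2$ ensures that the diameter is taken over a nonempty set of pairs, so that the inequalities are meaningful rather than vacuous.
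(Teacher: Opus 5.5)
Your proposal is correct and follows the paper's own proof. Proposition~\ref{prop2} gives that $\zdg(S)$ is connected with diameter at most $3$, and the spanning-subdigraph inclusions of Proposition~\ref{propsub} and Theorem~\ref{thm:zdg-in-adg} (the latter using $Z(S)\neq S$) carry connectivity and the diameter bound over to $\ezdg(S)$ and $\adg(S)$, with you spelling out the distance-monotonicity step that the paper leaves implicit.
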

\begin{proof}
	By the Proposition \ref{prop2}, the digraph $\zdg(S)$ is connected with diameter at most $3$. By Proposition \ref{propsub} and Theorem \ref{thm:zdg-in-adg}, $\zdg(S)$ is a spanning subdigraph of $\ezdg(S)$ and of $\adg(S)$, relatively.
\end{proof}
\begin{theorem}\label{thm:ge-conn}
	Let $|Z(S)^{*}|\geq 2$. Then the following statements are equivalent.
	\begin{enumerate}
		\item $\zdg(S)$ is connected.
		\item $\ezdg(S)$ is connected.
		\item Every zero divisor element of $S$ is a two-sided zero divisor.
	\end{enumerate}
\end{theorem}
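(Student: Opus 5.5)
(i)$\Leftrightarrow$(iii) is exactly Proposition \ref{prop2}. The implication (i)$\Rightarrow$(ii) follows from Proposition \ref{propsub}: $\zdg(S)$ is a spanning subdigraph of $\ezdg(S)$, so any directed path in $\zdg(S)$ is also one in $\ezdg(S)$. The real content is therefore (ii)$\Rightarrow$(iii). I would prove its contrapositive: if some zero divisor is one-sided, then $\ezdg(S)$ is not connected.

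So suppose $x\in Z(S)^{*}$ is, say, a left zero divisor but not a right zero divisor, meaning $sx=0$ implies $s=0$ for every $s\in S$. The right-zero-divisor case follows by applying the same argument to $S^{\mathrm{op}}$ and using Proposition \ref{prop:duality}, since a digraph is connected if and only if its converse is. The plan is to show that $x$ has in-degree zero in $\ezdg(S)$. Since $|Z(S)^{*}|\ge 2$, there is another vertex $y$, and then no directed path from $y$ to $x$ can exist.

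Suppose toward a contradiction that $y\to x$ is an arc of $\ezdg(S)$. Then $y^{m}x^{n}=0$ with $y^{m}\neq 0$ and $x^{n}\neq 0$. I would cancel the powers of $x$ one at a time. Write $y^{m}x^{n}=(y^{m}x^{n-1})x$. Because $x$ is not a right zero divisor, this forces $y^{m}x^{n-1}=0$. Iterating down to $n=1$ gives $y^{m}x=0$, and one more cancellation gives $y^{m}=0$. This contradicts $y^{m}\neq 0$.

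A side remark: $x$ cannot be nilpotent, since otherwise $x^{n_x-1}\cdot x=0$ with $x^{n_x-1}\neq 0$ would make $x$ a right zero divisor. This is consistent with the argument but is not needed.

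The main thing to check carefully is this cancellation step. In particular, the intermediate products $y^{m}x^{k}$ must be genuine elements of $S$ to which the defining property of ``not a right zero divisor'' applies. They are: the property says $s x=0 \Rightarrow s=0$ for all $s\in S$, and each $y^{m}x^{k}$ is such an $s$. Granting this, $x$ is a source or isolated in $\ezdg(S)$, hence unreachable from any other vertex, and $\ezdg(S)$ is disconnected. This completes the cycle of implications (iii)$\Rightarrow$(i)$\Rightarrow$(ii)$\Rightarrow$(iii).
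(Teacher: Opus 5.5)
Your proposal is correct and follows essentially the same route as the paper: (iii)$\Rightarrow$(i) by Wright's Proposition \ref{prop2}, (i)$\Rightarrow$(ii) by the spanning-subdigraph inclusion, and (ii)$\Rightarrow$(iii) by showing that an arc $y\to x$ of $\ezdg(S)$ forces $x$ to be a right zero divisor. The differences are only cosmetic. You argue contrapositively, cancelling the factors of $x$ one at a time; the paper instead takes $n$ minimal, so that $y^{m}x^{n-1}\neq 0$ (or $y^{m}$ itself when $n=1$) is directly a nonzero left annihilator of $x$. You also get the other side from $S^{\mathrm{op}}$ and Proposition \ref{prop:duality}, where the paper appeals to a symmetric argument.
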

\begin{proof}
	1 $\Rightarrow$ 2: If $\zdg(S)$ is connected, then $\ezdg(S)$ is connected because $\zdg(S)$ is a spanning subdigraph of $\ezdg(S)$.
	
	2 $\Rightarrow$ 3: Let $x\in Z(S)^{*}$. Since $|Z(S)^{*}|\geq 2$ and $\ezdg(S)$ is connected, the vertex $x$ must be the head of some arc $y \to x$ and the tail of some other arc $x \to w$ in $\ezdg(S)$. From the arc $y\to x$, there exist $m,n\in \mathbb{Z}^{+}$ such that $y^{m}x^{n}=0$, $y^{m}\neq 0$, and $x^{n}\neq 0$. Taking $n$ least, if $n=1$, then $y^{m}x=0$ with $y^{m}\neq 0$. If $n\geq 2$, then $(y^{m}x^{n-1})x=0$ with $y^{m}x^{n-1}\neq 0$ by the minimality of $n$. In both cases $x$ is a right zero divisor. By the symmetric argument, for the arc $x\to w$, $x$ is a left zero divisor. Hence every zero divisor of $S$ is two-sided.
	
	3 $\Rightarrow$ 1: This is the Proposition \ref{prop2}.
\end{proof}
\begin{lemma}\label{lem:degrees}
	Let $x\in Z(S)^{*}$. Then the out-degree of $x$ in $\zdg(S)$ is $|\Annr(x)\setminus\{0,x\}|$, and the in-degree of $x$ in $\zdg(S)$ is $|\Annl(x)\setminus\{0,x\}|$. If $Z(S)\neq S$, then the out-degrees of $x$ in $\ezdg(S)$ and in $\adg(S)$ are bounded below by $|\Annr(x)\setminus\{0,x\}|$, and the in-degrees of $x$ are bounded below by $|\Annl(x)\setminus\{0,x\}|$.
\end{lemma}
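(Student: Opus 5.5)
The plan is to compute the degrees in $\zdg(S)$ directly from Definition \ref{def:zdg}, and then transfer the resulting lower bounds to $\ezdg(S)$ and $\adg(S)$ through the spanning-subdigraph results.

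\textbf{Degrees in $\zdg(S)$.} I will exhibit a bijection between the out-neighbours of $x$ and the set $\Annr(x)\setminus\{0,x\}$.

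An out-neighbour of $x$ is a vertex $y\in Z(S)^{*}$ with $y\neq x$ and $xy=0$. Such a $y$ lies in $\Annr(x)$, is nonzero, and differs from $x$, so it lies in $\Annr(x)\setminus\{0,x\}$.

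Conversely, take $s\in\Annr(x)\setminus\{0,x\}$. Then $xs=0$ with $x\neq 0$, so $s$ is a right zero divisor. Hence $s\in Z(S)^{*}$, and $x\to s$ is an arc because $s\neq x$.

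These two inclusions show that the out-neighbourhood of $x$ is exactly $\Annr(x)\setminus\{0,x\}$. Since the digraph is simple, there are no multi-arcs, so the out-degree equals the cardinality of this set. The in-degree statement follows by the same argument with $\Annl(x)$ in place of $\Annr(x)$. Alternatively, it follows from Proposition \ref{prop:duality}, because passing to $S^{\mathrm{op}}$ swaps left and right annihilators and reverses arcs.

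\textbf{Lower bounds in $\ezdg(S)$ and $\adg(S)$.} By Proposition \ref{propsub}, $\zdg(S)$ is a spanning subdigraph of $\ezdg(S)$. By Theorem \ref{thm:zdg-in-adg}, which needs the hypothesis $Z(S)\neq S$, $\zdg(S)$ is also a spanning subdigraph of $\adg(S)$. Since these are spanning subdigraphs, the vertex set is the same in each case and every arc of $\zdg(S)$ is an arc of the larger digraph. Therefore the out-neighbourhood and in-neighbourhood of $x$ in $\zdg(S)$ are contained in those of $x$ in $\ezdg(S)$ and in $\adg(S)$. The degrees computed in the first step are therefore lower bounds for the corresponding degrees in the larger digraphs.

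The hypothesis $Z(S)\neq S$ is needed only for $\adg(S)$; Example \ref{ex:null} shows it cannot be dropped there. For $\ezdg(S)$ the bound holds without this hypothesis.

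\textbf{Main obstacle.} There is no serious difficulty. The one point needing care is the reverse inclusion in the first step: I must check that every nonzero element of $\Annr(x)$ other than $x$ really is a vertex, i.e.\ lies in $Z(S)^{*}$. This holds because $x$ is a nonzero left annihilator partner of $s$. I must also exclude the element $x$ itself, since $x^{2}=0$ would otherwise give a loop, and the digraphs are simple.
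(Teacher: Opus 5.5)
Your proposal is correct and follows essentially the same route as the paper: you identify the out-neighbourhood of $x$ in $\zdg(S)$ with $\Annr(x)\setminus\{0,x\}$, obtain the in-degree by the symmetric argument, and transfer both as lower bounds via Proposition \ref{propsub} and Theorem \ref{thm:zdg-in-adg}. Your extra remark is also correct: $Z(S)\neq S$ is needed only for $\adg(S)$, and Example \ref{ex:null} shows the bound can fail there without it.
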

\begin{proof}
	If $y\in\Annr(x)\setminus\{0,x\}$, then $xy=0$ with $x\neq 0$. Thus, $y\in Z(S)^{*}$ and $x\to y$ is an arc of $\zdg(S)$. Moreover, the head of every arc leaving $x$ lies in $\Annr(x)\setminus\{0,x\}$. Then the out-degree of $x$ in $\zdg(S)$ is $|\Annr(x)\setminus\{0,x\}|$. The in-degree formula is similar. The lower bounds follow from Proposition \ref{propsub} and Theorem \ref{thm:zdg-in-adg}, respectively.
\end{proof}
\begin{corollary}\label{cor:girth}
	Let $Z(S)\neq S$. If $S$ is a reduced semigroup with $|Z(S)^{*}|\geq 2$, then $$\Girth(\zdg(S))=\Girth(\ezdg(S))=\Girth(\adg(S))=2$$
	
\end{corollary}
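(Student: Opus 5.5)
The plan is to exhibit a directed $2$-cycle in $\zdg(S)$ and then transfer it to the two larger digraphs using the spanning-subdigraph results proved earlier. Since all three digraphs are simple, there are no loops, so every directed cycle has length at least $2$. It therefore suffices to show that each girth is at most $2$.

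The key step is to show that in a reduced semigroup every nonzero zero divisor lies on a $2$-cycle of $\zdg(S)$. Here I use that reduced means $\Nil(S)=\{0\}$, i.e.\ $s^{2}=0$ forces $s=0$. Take any $x\in Z(S)^{*}$; it exists since $|Z(S)^{*}|\geq 2$. Suppose first that $x$ is a left zero divisor, so $xy=0$ for some $y\in S^{*}$. Then
$$
(yx)^{2}=y(xy)x=0,
$$
so reducedness gives $yx=0$. Symmetrically, if $x$ is a right zero divisor with $yx=0$, then $(xy)^{2}=x(yx)y=0$ gives $xy=0$. In either case $xy=yx=0$ with $x,y\neq 0$, so $y\in Z(S)^{*}$. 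Moreover $x\neq y$, since otherwise $x^{2}=0$ would force $x=0$. Hence $x\to y\to x$ is a directed cycle of length $2$ in $\zdg(S)$, and $\Girth(\zdg(S))=2$.

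Next, by Proposition \ref{propsub}, $\zdg(S)$ is a spanning subdigraph of $\ezdg(S)$. Since $Z(S)\neq S$, Theorem \ref{thm:zdg-in-adg} shows it is also a spanning subdigraph of $\adg(S)$. So the arcs $x\to y$ and $y\to x$ persist in both digraphs, giving $\Girth(\ezdg(S))\leq 2$ and $\Girth(\adg(S))\leq 2$. Combined with the lower bound $2$ from simplicity, all three girths equal $2$.

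The only real obstacle is producing a two-sided annihilating pair from a merely one-sided relation $xy=0$. The identity $(yx)^{2}=y(xy)x$ handles this, and it is exactly where reducedness is used. It is also worth noting that the hypothesis $Z(S)\neq S$ is needed only for the $\adg(S)$ part, as Example \ref{ex:null} shows.
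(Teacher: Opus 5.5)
Your proof is correct and is essentially the paper's own argument. The identity $(yx)^{2}=y(xy)x$ together with reducedness turns a one-sided relation $xy=0$ into a $2$-cycle $x\to y\to x$ of $\zdg(S)$, and this cycle passes to $\ezdg(S)$ and $\adg(S)$ via Proposition~\ref{propsub} and Theorem~\ref{thm:zdg-in-adg}.

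One small correction to your closing remark: the null semigroup of Example~\ref{ex:null} is not reduced, so it does not show that $Z(S)\neq S$ is needed in this corollary. A reduced witness is $S=\{0,a,b\}$ with $a^{2}=a$, $b^{2}=b$, $ab=ba=0$, where $Z(S)=S$ and $\adg(S)$ has no arcs, so its girth is $\infty$.
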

\begin{proof}
	Let $x\in Z(S)^{*}$. Then $xy=0$ or $yx=0$ for some $y\neq 0$, and $y\neq x$ since $x^{2}=0$ would make $x$ a nonzero nilpotent element. If $xy=0$, then $(yx)^{2}=y(xy)x=0$. Thus $yx=0$ since $S$ is reduced. The case $yx=0$ is similar. Hence, the cycle $x\to y\to x$ lies in $\zdg(S)$ and, by Proposition \ref{propsub} and Theorem \ref{thm:zdg-in-adg}, in $\ezdg(S)$ and $\adg(S)$, respectively.
\end{proof}
\begin{theorem}\label{thm:ge-cycle}
	The followings hold.
	\begin{enumerate}
		\item $\ezdg(S)$ contains a directed cycle if and only if $\zdg(S)$ contains a directed cycle.
		\item $\Girth(\zdg(S))\in\{2,3,4,\infty\}$ and $\Girth(\ezdg(S))\in\{2,3,4,\infty\}$, and $\Girth(\ezdg(S))\leq\Girth(\zdg(S))$.
	\end{enumerate}
\end{theorem}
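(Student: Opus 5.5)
The plan is to get (1) from distance bounds in $\zdg(S)$, and (2) from Theorem \ref{theo4} together with Proposition \ref{propsub}.

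\textbf{Part (1).} One direction is immediate. If $\zdg(S)$ contains a directed cycle, the same cycle lies in $\ezdg(S)$, since $\zdg(S)$ is a spanning subdigraph of $\ezdg(S)$ by Proposition \ref{propsub}.

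For the converse, let $C$ be a directed cycle in $\ezdg(S)$, and choose two consecutive vertices $x\to y$ on $C$. Since digraphs are simple, $x\neq y$. Every vertex of $C$ is the head of an arc of $\ezdg(S)$ and the tail of another. I will reuse the argument from $2\Rightarrow 3$ in Theorem \ref{thm:ge-conn}, which takes the exponent minimal. From an arc $u\to v$ with $u^m v^n=0$ and $n$ least, $v$ is a right zero divisor, witnessed by $u^m v^{n-1}\neq 0$ (or by $u^m$ when $n=1$). Symmetrically, $u$ is a left zero divisor. Hence both $x$ and $y$ are two-sided zero divisors.

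Now I will apply Wright's bound, Proposition \ref{prop2}. In its original form it says the directed distance in $\zdg(S)$ from a left zero divisor to a right zero divisor is at most $3$. Since $x$ is a left zero divisor and $y$ a right one, this gives $\overrightarrow{d}(x,y)\le 3$ in $\zdg(S)$. Since $y$ is a left zero divisor and $x$ a right one, it also gives $\overrightarrow{d}(y,x)\le 3$. Concatenating a shortest path $x\leadsto y$ with a shortest path $y\leadsto x$ gives a closed directed walk in $\zdg(S)$ through the two distinct vertices $x$ and $y$.

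A closed walk of positive length with at least two distinct vertices contains a directed cycle. To see this, take the first vertex that repeats along the walk; the segment between its two occurrences is a cycle, of length at least $2$ because there are no loops. So $\zdg(S)$ contains a directed cycle.

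The point I expect to need the most care is the exact form of Proposition \ref{prop2} being used. As quoted it is stated for arbitrary vertices, but what is needed is Wright's bound from a left zero divisor to a right zero divisor. If that form were not available, my fallback would be a direct construction. Take $a,b\neq 0$ with $ax=0$ and $xb=0$. If $ba=0$, then $a\to x\to b\to a$ is a closed walk. If $ba\neq 0$, then $x\to ba\to x$ is a 2-cycle unless $ba=x$, and the case $ba=x$ would be handled separately. This fallback has many degenerate cases, which is why I prefer the distance argument.

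\textbf{Part (2).} If $\zdg(S)$ has a directed cycle, take any of its arcs. By Theorem \ref{theo4}, that arc lies on a directed cycle of length at most $4$. Every cycle in a simple digraph has length at least $2$, so $\Girth(\zdg(S))\in\{2,3,4\}$. If there is no cycle, the girth is $\infty$.

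Since $\zdg(S)$ is a spanning subdigraph of $\ezdg(S)$, every cycle of $\zdg(S)$ is a cycle of $\ezdg(S)$, so $\Girth(\ezdg(S))\le\Girth(\zdg(S))$. If $\ezdg(S)$ has a cycle, then by part (1) so does $\zdg(S)$. In that case
$$2\le \Girth(\ezdg(S))\le \Girth(\zdg(S))\le 4.$$
Otherwise both girths equal $\infty$. This gives $\Girth(\ezdg(S))\in\{2,3,4,\infty\}$.
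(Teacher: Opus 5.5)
Your proof is correct. Part (2) matches the paper's argument, but part (1) takes a different route.

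The paper lifts the whole cycle. For $x_1\to\cdots\to x_k\to x_1$ in $\ezdg(S)$ it takes witnesses $x_i^{m_i}x_{i+1}^{n_{i+1}}=0$ and sets $t_i=\max\{m_i,n_i\}$. It then checks $x_i^{t_i}x_{i+1}^{t_{i+1}}=0$ with $x_i^{t_i}\neq 0$, and declares $x_1^{t_1}\to\cdots\to x_k^{t_k}\to x_1^{t_1}$ a cycle of $\zdg(S)$. This is elementary and avoids Wright's result. When the powers are pairwise distinct it even preserves the length $k$. But the paper never checks distinctness, and the powers can collapse. Take $S=\{0,x,y,w\}$ with $x^2=xy=yx=y^2=w$ and all other products $0$. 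The $2$-cycle $x\to y\to x$ of $\ezdg(S)$, witnessed by $x^2y^2=0=y^2x^2$, is sent to $w\to w\to w$. That is not a cycle of $\zdg(S)$, even though $x\to w\to x$ is one. Repairing it needs deletion of repetitions plus a separate case when all the powers coincide.

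Your argument cannot degenerate this way, because your closed walk passes through two distinct vertices $x\neq y$ of the original cycle. The cost is reliance on Wright's bound in its left-to-right form rather than Proposition~\ref{prop2} as quoted, which is stated for arbitrary pairs and is too strong. You were right to flag this; the paper's introduction does state the left-to-right form. Your fallback is unnecessary, since the bound has a short proof. Pick $a,b\neq 0$ with $xa=0=by$. If $ab\neq 0$, then $x\to ab\to y$, or $x\to y$ directly when $ab\in\{x,y\}$. If $ab=0$, then $x,a,b,y$ becomes a walk in $\zdg(S)$ after deleting repeated consecutive terms.

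Your argument also proves more than stated, since it only uses that $x$ and $y$ are distinct two-sided zero divisors. It shows that each of $\zdg(S)$ and $\ezdg(S)$ contains a directed cycle if and only if $S$ has two distinct nonzero two-sided zero divisors.
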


\begin{proof}
	\begin{enumerate}
		\item Every directed cycle of $\zdg(S)$ is a directed cycle of $\ezdg(S)$. Conversely, let $$C= x_{1}\to x_{2}\to\cdots\to x_{k}\to x_{1}$$ be a directed cycle of $\ezdg(S)$ with $k\geq 2$ (read all indices modulo $k$). Since $C$ is a cycle, for each $i$, there exist $m_i, n_{i+1}$ positive integers such that $x_i^{m_i} x_{i+1}^{n_{i+1}}=0$ with $x_i^{m_i} \neq 0$ and $x_{i+1}^{n_{i+1}} \neq 0$. Take $i+1$, then $x_{i+1}^{m_{i+1}} x_{i+2}^{n_{i+2}}=0$ with $x_{i+1}^{m_{i+1}} \neq 0$ and $x_{i+2}^{n_{i+2}} \neq 0$. \\
		If $m_{i+1} > n_{i+1}$, since $x_i^{m_i} x_{i+1}^{n_{i+1}}=0$, $x_i^{m_i} x_{i+1}^{m_{i+1}}=0$  with $x_{i+1}^{m_{i+1}} \neq 0$ as well.
		Let $t_{i+1}=max\{n_{i+1},m_{i+1}\}$, for all $i$. Then, $x_{i}^{t_{i}}x_{i+1}^{t_{i+1}}=0$ and $x_{i}^{t_{i}}\neq 0$, for all $i$. Hence, $$
		x_1^{t_1} \to x_{2}^{t_{2}} \to \dots \to x_k^{t_k} \to x_1^{t_1}
		$$
		is a cycle of $\zdg(S)$.
		
		\item By Theorem \ref{theo4}, every arc of $\zdg(S)$ lying on a directed cycle of length greater than $4$ also lies on one of length at most $4$. Therefore, $\Girth(\zdg(S))\leq 4$ whenever $\zdg(S)$ contains a directed cycle, and $\Girth(\zdg(S))=\infty$ otherwise. If $\ezdg(S)$ contains a directed cycle, then so does $\zdg(S)$ by (i), and every directed cycle of $\zdg(S)$ lies in $\ezdg(S)$. Hence, $$\Girth(\ezdg(S))\leq\Girth(\zdg(S))\leq 4$$
	\end{enumerate}	
\end{proof}

\begin{definition}\label{def:knit}
	Let $S$ be a semigroup with zero, $V\subseteq S$, and $D=(V,E)$ be a digraph. A directed path $x_{1}\to x_{2}\to\cdots\to x_{n}$ in $D$ with $n\geq 2$ is a \textbf{directed left path} if $x_{1}\neq x_{n}$ and $x_{1}x_{i}=x_{n}x_{i}$, for all $i\in\{1,2,\ldots,n\}$. If $D$ contains a left path, the length of a shortest left path of $D$ is the \textbf{knit degree} $\kd(D)$.
\end{definition}

\begin{proposition}\label{prop:knit}
	Let $D$ be one of $\zdg(S)$, $\ezdg(S)$, and $\adg(S)$.
	\begin{enumerate}
		\item $\kd(D)=1$ if and only if $D$ contains an arc $x\to y$ with $x^{2}=yx$ and $xy=y^{2}$. 
		\item If $x,y\in Z(S)^{*}$ are distinct with $x^{2}=y^{2}=xy=yx=0$, then $\kd(\zdg(S))=\kd(\ezdg(S))=1$, and $\kd(\adg(S))=1$ whenever $Z(S)\neq S$.
		\item If $\zdg(S)$ contains a left path and $Z(S)\neq S$, then $\ezdg(S)$ and $\adg(S)$ contain left paths, and
		$$
		\kd(\adg(S))\leq \kd(\zdg(S)) \text{ and } \kd(\ezdg(S))\leq \kd(\zdg(S))
		$$
	\end{enumerate}
\end{proposition}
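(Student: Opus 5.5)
The three parts follow almost directly from Definition \ref{def:knit} together with the inclusions already established: Proposition \ref{propsub} and Theorem \ref{thm:zdg-in-adg}.

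For (1), I first observe that a directed left path of length $1$ is just an arc $x_{1}\to x_{2}$ with $x_{1}\neq x_{2}$ and $x_{1}x_{i}=x_{2}x_{i}$ for $i\in\{1,2\}$. For $i=1$ this reads $x_{1}^{2}=x_{2}x_{1}$, and for $i=2$ it reads $x_{1}x_{2}=x_{2}^{2}$. Writing $x=x_{1}$ and $y=x_{2}$ gives exactly the stated condition. Since every path has length at least $1$, the knit degree equals $1$ precisely when such an arc exists. Both directions are immediate, because distinctness of $x$ and $y$ is automatic for arcs in a simple digraph.

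For (2), the first step is to check that $x\to y$ is an arc of $\zdg(S)$, which holds because $xy=0$ and $x\neq y$. The identities $x^{2}=0=yx$ and $xy=0=y^{2}$ then satisfy the criterion in (1), so $\kd(\zdg(S))=1$. Because $\zdg(S)$ is a spanning subdigraph of $\ezdg(S)$ by Proposition \ref{propsub}, the same arc lies in $\ezdg(S)$, and (1) gives $\kd(\ezdg(S))=1$. When $Z(S)\neq S$, Theorem \ref{thm:zdg-in-adg} places the arc in $\adg(S)$ as well, giving $\kd(\adg(S))=1$.

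For (3), take a shortest left path $x_{1}\to\cdots\to x_{n}$ in $\zdg(S)$. Its length is $\kd(\zdg(S))$. The left path condition involves only the vertices and the products $x_{1}x_{i}$ and $x_{n}x_{i}$, not the adjacency relation, so it is preserved under passing to any digraph on the same vertex set that contains these arcs. By Proposition \ref{propsub} the path lies in $\ezdg(S)$, and by Theorem \ref{thm:zdg-in-adg} (using $Z(S)\neq S$) it lies in $\adg(S)$. Hence both digraphs contain left paths, and their minimal left path lengths are at most $\kd(\zdg(S))$.

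The only point needing care is the bookkeeping of length versus the number of vertices in Definition \ref{def:knit}. The definition indexes a path by its $n$ vertices, but the knit degree is the length, which is $n-1$. In (1), length $1$ therefore corresponds to $n=2$, and I will state this explicitly. Nothing else is a genuine obstacle.
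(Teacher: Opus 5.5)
Your proposal is correct and follows the paper's proof essentially step for step. Part (1) unpacks the left-path definition for a path on two vertices. Part (2) checks that $x\to y$ is such an arc in $\zdg(S)$ and transfers it to $\ezdg(S)$ and $\adg(S)$ via Proposition~\ref{propsub} and Theorem~\ref{thm:zdg-in-adg}. Part (3) transfers a shortest left path of $\zdg(S)$ along the same spanning-subdigraph inclusions, using that the left-path condition depends only on products of the vertices. Your explicit note that a left path on $n$ vertices has length $n-1$ is a small clarification the paper leaves implicit.
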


\begin{proof} Let $x,y \in Z(S)^*$ be distinct vertices.
	\begin{enumerate}
		\item Let $x\to y$ be a left path of length 1 in $D$. Then, $xx=y x$, $xy=y y$. Namely, $x^{2}=yx$ and $xy=y^{2}$. 
		\item By hypothesis, $xy=0$, namely $x \to y$ is an arc of $\zdg(S)$, and $y^2=0, x^2=0=yx$ imply that $x \to y$ is a left path. $x \to y$ is also left path of $\ezdg(S)$ because $\zdg(S)$ is a spanning subdigraph $\ezdg(S)$. Moreover, $x \to y$ is left path in $\adg(S)$ since $Z(S)\neq S$ by Theorem \ref{thm:zdg-in-adg}.
		\item  Every left path of $\zdg(S)$ is a directed path of $\ezdg(S)$ and, when $Z(S)\neq S$, of $\adg(S)$ with the same vertex sequence since the defining products are unchanged. Minimizing over left paths yields the inequalities.
	\end{enumerate}	
\end{proof}  

\begin{example}\label{ex:knit2}
	Let $S=\{0,a,b,c\}$ be the semigroup with $b^{2}=bc=cb=c^{2}=b$ and all other products equal to zero. Clearly, $\Annl(a)=\Annr(a)=S$, hence $Z(S)^{*}=\{a,b,c\}$. Then the graph $\zdg(S)$ is
	
	$$\begin{tikzpicture}[>=stealth, semithick]
		
		\coordinate (b) at (0,0);
		\coordinate (a) at (2,0);   
		\coordinate (c) at (4,0);
		
		\fill (b) circle (1.3pt) node[below] {$b$};
		\fill (a) circle (1.3pt) node[below] {$a$};
		\fill (c) circle (1.3pt) node[below] {$c$};

		\draw[shorten >=0.1cm,shorten <=0.01cm,->] (b) to[bend left=25] (a);
		\draw[shorten >=0.1cm,shorten <=0.01cm,->] (a) to[bend left=25] (b);
		\draw[shorten >=0.1cm,shorten <=0.01cm,->] (a) to[bend left=25] (c);
		\draw[shorten >=0.1cm,shorten <=0.01cm,->] (c) to[bend left=25] (a);
	\end{tikzpicture} $$
	Since $bs=cs$ for all $s\in S$, every directed path from $b$ to $c$ is a left path, and $b\to a\to c$ is a left path of length $2$. No left path of length $1$ exists: by Proposition \ref{prop:knit}(i). Hence, $\kd(\zdg(S))=2$.
	Here, $b^n c^m = b \neq 0$ for all $n,m \in \mathbb{Z}^+$. Thus, neither $b \to c$ nor $c \to b$ are arcs of $\ezdg(S)$. Then, $\ezdg(S)=\zdg(S)$ and $\kd(\ezdg(S))=2$ as well.
	Subsequently, a straightforward computation shows that $\Annl(xy)\setminus \Annl(x) \cup \Annl(y) = \emptyset$ and $\Annr(xy)\setminus \Annr(x) \cup \Annr(y) = \emptyset$, for all $x,y\in Z(S)^{*}$. Namely, $\adg(S)$ has no arcs.
\end{example}

%%%%%%%%%%%%%%%%%%%%%%%%%%%%%%%%%%%%%%%%%%%%%%%%%%%%%%%%%%%%%%%%%%%%%%%%%%%%%%%%
\section{Extended Zero-Divisor Digraphs of Rings}\label{sec:rings}

In this section, necessary and sufficient conditions for $\ezdg(R)=\zdg(R)$ to hold in unital rings are established and supported by examples and corollaries. Furthermore, for the noncommutative unital ring $M_n(F)$, a different necessary and sufficient condition ensuring the equality $\ezdg(M_n(F))=\zdg(M_n(F))$ is presented. In addition, the properties of one-sided identities are investigated.

\begin{theorem}\label{thm:ring-eq}
	Let $R$ be a unital ring. Then $\ezdg(R)=\zdg(R)$ if and only if the following two conditions hold.
	\begin{enumerate}
		\item $x^{2}=0$, for all $x\in\Nil(R)$.
		\item $\Annl(y^{2})=\Annl(y)$ and $\Annr(y^{2})=\Annr(y)$, for all $y\in Z(R)\setminus\Nil(R)$.
	\end{enumerate}
\end{theorem}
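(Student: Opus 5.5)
The plan is to reduce to Theorem \ref{thm:syc-eq} (the semigroup characterization, equivalence of 1 and 2). Condition (ii) of the present statement is exactly the ``moreover'' clause of Theorem \ref{thm:syc-eq}(2). So it suffices to show that, in a unital ring $R$, the nilpotent part of Theorem \ref{thm:syc-eq}(2) is equivalent to condition (i). That nilpotent part says: $n_x\le 3$ for all $x\in\Nil(R)$, and whenever $n_x=3$ we have $\Annl(x^2)\setminus\Annl(x)=\{x\}$ and $\Annr(x^2)\setminus\Annr(x)=\{x\}$.

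The direction from (i) is trivial. If $x^2=0$ for all nilpotent $x$, then $n_x\le 2$ always, so the clause about $n_x=3$ is vacuous. Theorem \ref{thm:syc-eq} then gives $\ezdg(R)=\zdg(R)$.

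For the converse, assume $\ezdg(R)=\zdg(R)$. Theorem \ref{thm:syc-eq} yields (ii), together with $n_x\le 3$ for all nilpotent $x$. It remains to exclude $n_x=3$, and this is the step that uses the ring structure. Suppose $x^3=0\neq x^2$. Then $\Annl(x^2)\setminus\Annl(x)=\{x\}$. Now use the additive structure: consider $1+x$ and $-x$, or more efficiently $x+x^2$. We have $(x+x^2)x^2=x^3+x^4=0$, so $x+x^2\in\Annl(x^2)$. Also $(x+x^2)x=x^2+x^3=x^2\neq 0$, so $x+x^2\notin\Annl(x)$. Hence $x+x^2=x$, forcing $x^2=0$, a contradiction.

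Thus $n_x\le 2$, i.e. $x^2=0$, for every nilpotent $x$, which is (i). The whole argument rests only on Theorem \ref{thm:syc-eq}. The main point to check is that the perturbed element $x+x^2$ (any $x+u$ with $u\in\Ann(x^2)\cap\Annl(x)\setminus\{0\}$ would do) is distinct from $x$ yet lies in the difference set. This holds because $x^2\neq 0$, so $x+x^2\neq x$. No unit is actually needed beyond the ring addition.
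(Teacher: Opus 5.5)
Your proof is correct, but you organize the forward direction differently from the paper.

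\textbf{The paper's route.} It delegates only ($\Leftarrow$) to Theorem \ref{thm:syc-eq}. For ($\Rightarrow$) it proceeds mostly by hand:
\begin{enumerate}
\item it rules out $n_x\geq 4$ via Corollary \ref{ggg};
\item it derives condition (ii) directly: a nonzero $s\in\Annl(y^2)$ gives an arc $s\to y$ of $\ezdg(R)=\zdg(R)$, hence $sy=0$;
\item for $n_x=3$ it uses the same element $y=x+x^2$, but as a \emph{vertex}. It checks $y\in Z(R)^*$, $y^2=x^2\neq 0$ and $xy^2=0$, while $xy=x^2\neq 0$. So $x\to y$ is an arc of $\ezdg(R)$ that is missing from $\zdg(R)$.
\end{enumerate}

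\textbf{Your route.} You take the whole implication $1\Rightarrow 2$ of Theorem \ref{thm:syc-eq} as given. You then use $x+x^2$ as a \emph{witness} in $\Annl(x^2)\setminus\Annl(x)$, which contradicts that this set equals $\{x\}$.

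\textbf{What each buys.} Your version is shorter and isolates exactly where additivity enters. When $n_x=3$, every $x+u$ with $0\neq u\in\Annl(x)$ (for instance $u=x^2$) lies in $\Annl(x^2)\setminus\Annl(x)$. So the $n_x=3$ clause of the semigroup theorem can never hold in a ring, which explains why the ring characterization collapses to $x^2=0$. Your version also avoids citing Corollary \ref{ggg}, which is stated for noncommutative semigroups even though $R$ may be commutative. The paper's version leans less on the cited characterization and produces an explicit arc distinguishing the two digraphs. As you note, neither argument actually uses the identity of $R$; the paper writes $y=x(1+x)$, but only $x+x^2$ is ever needed.
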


\begin{proof}
	($\Rightarrow$): Assume that $\ezdg(R)=\zdg(R)$.
	
	First, to prove that the item 1, suppose that there is an $x\in\Nil(R)$ with $n_{x}\geq 3$. If $n_{x}\geq 4$, then $\ezdg(R)\neq\zdg(R)$ by Corollary \ref{ggg} which is a contradiction. Thus, $n_{x}=3$. Let $y=x(1+x)=x+x^{2}$. If $y=0$, then $x=-x^{2}$, which implies $x^{2}=-x^{3}=0$, and if $y=x$, then $x^{2}=0$. Both contradict $n_x=3$. Since the elements $x$ and $1+x$ commute,
	$$
	y^{2}=x^{2}(1+x)^{2}=x^{2}+2x^{3}+x^{4}=x^{2}\neq 0
	\text{ and }
	y^{3}=x^{3}(1+x)^{3}=0
	$$
	so $y\in\Nil(R)\setminus\{0\}\subseteq Z(R)^{*}$. From $xy^{2}=x\,x^{2}=0$ with $x\neq 0$ and $y^{2}\neq 0$, hence $x\to y$ is an arc in $\ezdg(R)$. However,
	$$
	xy=x^{2}(1+x)=x^{2}+x^{3}=x^{2}\neq 0
	$$
	so $x\to y$ is not an arc in $\zdg(R)$, which contradicts $\ezdg(R)=\zdg(R)$. Consequently, $n_{x}=2$, for all $x\in\Nil(R)\setminus\{0\}$.
	
	To show that 2 holds, let $y\in Z(R)\setminus\Nil(R)$ and $0\neq s\in\Annl(y^{2})$. Since $sy^{2}=0$ with $y^{2}\neq 0$, the element $s \in Z(R)^{*}$. If $s=y$, then $y^{3}=0$, which contradicts $y\notin\Nil(R)$. Hence, $s\neq y$. From $s(y^{2})=0$ with $s\neq 0$ and $y^{2}\neq 0$, $s\to y$ is an arc in $\ezdg(R)=\zdg(R)$. Then, $sy=0$, namely $s\in\Annl(y)$. Therefore, $\Annl(y^{2})\subseteq\Annl(y)$, and $\Annl(y)\subseteq\Annl(y^{2})$ always holds. The equality $\Annr(y^{2})=\Annr(y)$ follows by a symmetric argument using the arc $y\to s$.
	
	($\Leftarrow$): 
	Since $n_{x}=2$, for all $x\in \Nil(S)$, without further computation, this is clear by Theorem \ref{thm:syc-eq}.
\end{proof}

\begin{corollary}\label{cor:ring-neq}
	Let $R$ be a unital ring. Then $\ezdg(R)\neq\zdg(R)$ if and only if there is an $x\in\Nil(R)$ with $n_{x}\geq 3$, or there is a $y\in Z(R)\setminus\Nil(R)$ with $\Annl(y^{2})\neq\Annl(y)$ or $\Annr(y^{2})\neq\Annr(y)$.
\end{corollary}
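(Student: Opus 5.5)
This corollary is the contrapositive of Theorem \ref{thm:ring-eq}, so the plan is to negate both sides of that equivalence and then simplify the negated conditions. By Theorem \ref{thm:ring-eq}, $\ezdg(R)=\zdg(R)$ holds exactly when conditions 1 and 2 both hold. Hence $\ezdg(R)\neq\zdg(R)$ holds exactly when condition 1 fails or condition 2 fails. I will translate each failure into the form stated in the corollary.

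The failure of condition 2 is immediate. It means there is some $y\in Z(R)\setminus\Nil(R)$ with $\Annl(y^{2})\neq\Annl(y)$ or $\Annr(y^{2})\neq\Annr(y)$, which is literally the second alternative.

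For condition 1, its failure means there is an $x\in\Nil(R)$ with $x^{2}\neq 0$. Such an $x$ is nonzero and nilpotent with $x^{2}\neq 0$, so its nilpotency index satisfies $n_{x}\geq 3$. Conversely, if $x\in\Nil(R)$ has $n_{x}\geq 3$, then $x^{2}\neq 0$ by the definition of $n_{x}$ as the least exponent killing $x$. So the failure of condition 1 is exactly the existence of a nilpotent $x$ with $n_{x}\geq 3$.

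Combining the two translations gives both directions of the corollary simultaneously. The only point needing care, and it is minor, is the convention for $n_{x}$: it is defined only for $x\in\Nil(R)^{*}$. The case $x=0$ satisfies $x^{2}=0$ trivially, so it never witnesses the failure of condition 1. I expect no real obstacle beyond stating this bookkeeping cleanly.
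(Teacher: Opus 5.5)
Your proposal is correct and matches the paper, which states this corollary without a separate proof as the negation of Theorem~\ref{thm:ring-eq}. The only work needed is your bookkeeping: translating the failure of condition~1 into the existence of a nilpotent $x$ with $n_{x}\geq 3$, using $x\neq 0$, $x^{2}\neq 0$ and the minimality of $n_{x}$.
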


The following example demonstrates that the semigroup conditions cannot be extended directly to the ring setting. Although  $\zdg(S)=\ezdg(S)$ holds in the underlying semigroup, who contains a nilpotent $x$ with $n_x =3$, the ring $R$ constructed from the same semigroup $S$ fails to satisfy $\zdg(R)=\ezdg(R)$. 

\begin{example}\label{rem:semigroup-vs-ring}
	Let $S=\{0,x,x^{2}\}$ be the cyclic nilpotent semigroup with $x^{3}=0$. Here, $\zdg(S)$ is the complete digraph on two vertices. Since $\zdg(S)$ is a spanning subdigraph of $\ezdg(S)$, $\ezdg(S)$ is also the complete digraph on two vertices.
	
	Consider the ring $R = \{a +bx+cx^2 : a,b,c \in \mathbb{Z}, x \in S\}$. If $y=x+x^{2}$, since $x^2y = 0$ and $x^2 \neq 0$, then $y \in Z(R)^*$. This also implies that $x \to y$ is an arc of $\ezdg(R)$ but $xy=x(x+x^2)=x^2 \neq 0$ meaning $x \to y$ is not an arc of $\zdg(R)$. Thus, $\zdg(R)\neq\ezdg(R)$.
\end{example}

\begin{example}\label{ex:t2}
	Let $R=T_{2}(\mathbb{Z}_{2})$. Then
	
	$$Z(R)^*=\left\{\begin{bmatrix}
		\overline{1} & \overline{0} \\
		\overline{0} & \overline{0} 
	\end{bmatrix}, \begin{bmatrix}
		\overline{0} & \overline{1} \\
		\overline{0} & \overline{0} 
	\end{bmatrix}, \begin{bmatrix}
		\overline{0} & \overline{0} \\
		\overline{0} & \overline{1}
	\end{bmatrix}, \begin{bmatrix}
		\overline{1} & \overline{1} \\
		\overline{0} & \overline{0}
	\end{bmatrix}, \begin{bmatrix}
		\overline{0} & \overline{1}\\
		\overline{0} & \overline{1}
	\end{bmatrix}\right\}$$
	The only nonzero nilpotent element is $e_{12}$, and $n_{e_{12}}=2$. The remaining four vertices are idempotent, so condition (ii) of Theorem \ref{thm:ring-eq} holds. Hence, $\ezdg(R)=\zdg(R)$. A direct computation shows that $\zdg(R)= \ezdg(R)=\adg(R)$.
	
	\begin{figure}[h]
		\centering
		\begin{tikzpicture}[>=Stealth]
			
			\node (e12) at (0,2)
			{$\begin{bmatrix}
					\overline{0}&\overline{1}\\
					\overline{0}&\overline{0}
				\end{bmatrix}$};
			
			\node (e11) at (-3,0)
			{$\begin{bmatrix}
					\overline{1}&\overline{0}\\
					\overline{0}&\overline{0}
				\end{bmatrix}$};
			
			\node (e22) at (3,0)
			{$\begin{bmatrix}
					\overline{0}&\overline{0}\\
					\overline{0}&\overline{1}
				\end{bmatrix}$};
			
			\node (a) at (-1.3,-2.4)
			{$\begin{bmatrix}
					\overline{1}&\overline{1}\\
					\overline{0}&\overline{0}
				\end{bmatrix}$};
			
			\node (b) at (1.8,-2.4)
			{$\begin{bmatrix}
					\overline{0}&\overline{1}\\
					\overline{0}&\overline{1}
				\end{bmatrix}$};
			
			% Oklar
			
			\draw[->] (e12) -- (e11);
			\draw[->] (e12) -- (a);
			
			\draw[->,bend left=12] (e11) to (e22);
			\draw[->,bend left=12] (e22) to (e11);
			
			\draw[->] (b) -- (e11);
			\draw[->] (e22) -- (a);
			
			\draw[->,bend left=18] (a) to (b);
			\draw[->,bend left=18] (b) to (a);
			
			\draw[->] (b) to (e12);
			
			\draw [->] (e22) -- (e12);
			
		\end{tikzpicture}
		\caption{ $\zdg(T_{2}(\mathbb{Z}_{2}))=\ezdg(T_{2}(\mathbb{Z}_{2}))=\adg(T_{2}(\mathbb{Z}_{2}))$}
		\label{fig:t2}
	\end{figure}
\end{example}

\newpage

\begin{example}\label{ex:t3-counts}
	Let $R=T_{3}(\mathbb{Z}_{2})$. The element $x=\begin{bmatrix}
		\overline{0} & \overline{1} & \overline{0} \\
		\overline{0} & \overline{0} & \overline{1} \\
		\overline{0} & \overline{0} & \overline{0}
	\end{bmatrix}$ is nilpotent with $x^{2}=e_{13}\neq [\overline{0}]$ and $x^{3}=[\overline{0}]$; so $n_{x}=3$, and Corollary \ref{cor:ring-neq} yields $\ezdg(R)\neq\zdg(R)$. Furthermore, since $Z(R) \neq R$, by Theorem \ref{thm:zdg-in-adg}, $\zdg(R)$ is a spanning subdigraph of $\adg(R)$. 
	Let $y=\begin{bmatrix}
		\overline{0} & \overline{1} & \overline{0} \\
		\overline{0} & \overline{0} & \overline{1} \\
		\overline{0} & \overline{0} & \overline{0} 
	\end{bmatrix}$ and $z=\begin{bmatrix}
		\overline{1} & \overline{1} & \overline{0} \\
		\overline{0} & \overline{0} & \overline{1} \\
		\overline{0} & \overline{0} & \overline{0} 
	\end{bmatrix}$, then $yz= e_{13} \neq [\overline{0}]$. Thus, $y\to z$ is not an arc of $\zdg(R)$. However, since $\Annl(yz)=\left\{\begin{bmatrix}
		\overline{0} & \overline{b} & \overline{c} \\
		\overline{0} & \overline{d} & \overline{e} \\
		\overline{0} & \overline{0} & \overline{f} 
	\end{bmatrix} : \overline{b},\overline{c},\overline{d},\overline{e},\overline{f} \in \mathbb{Z}_2\right\}$ and $\Annl(y)=\left\{\begin{bmatrix}
		\overline{0} & \overline{0} & \overline{c} \\
		\overline{0} & \overline{0} & \overline{e} \\
		\overline{0} & \overline{0} & \overline{f} 
	\end{bmatrix} : \overline{c},\overline{e},\overline{f} \in \mathbb{Z}_2\right\}$, implying that $\Annl(yz)\setminus \Annl(y) \cup \Annl(z) \neq \emptyset$. Thus, $y \to z$ is an arc of $\adg(R)$ and $\adg(R) \neq \zdg(R)$.
	
	For the idempotent matrices $w=\begin{bmatrix}
		\overline{0} & \overline{0} & \overline{0} \\
		\overline{0} & \overline{1} & \overline{0} \\
		\overline{0} & \overline{0} & \overline{1} 
	\end{bmatrix}$ and $v=\begin{bmatrix}
		\overline{0} & \overline{0} & \overline{1} \\
		\overline{0} & \overline{0} & \overline{0} \\
		\overline{0} & \overline{0} & \overline{1} 
	\end{bmatrix}$, $wv= e_{33} \neq [\overline{0}]$. Hence, $w \to v$ is not an arc of $\ezdg(R)$. Let $r=\begin{bmatrix}
		\overline{1} & \overline{0} & \overline{0} \\
		\overline{0} & \overline{1} & \overline{0} \\
		\overline{0} & \overline{0} & \overline{0} 
	\end{bmatrix}$. Since $r(wv)=[\overline{0}]$, $rw=e_{22}$, and $rv=e_{13}$, then $r \in \Annl(wv) \setminus (\Annl(w) \cup \Annr(v))$. Namely, $w \to v$ is an arc of $\adg(R)$, so $\adg(R) \neq \ezdg(R)$.
	Hence, three digraphs are pairwise distinct.
\end{example}

Here $n \in \mathbb{Z}^+$ with $n \geq 2$, and $M_{n}(F)$ denotes the unital ring of $n\times n$ matrices over a field $F$.

\begin{theorem}\label{thm:matrix}
	$\ezdg(M_{n}(F))=\zdg(M_{n}(F))$ if and only if $n=2$.
\end{theorem}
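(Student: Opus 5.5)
We use Theorem \ref{thm:ring-eq}, since $M_{n}(F)$ is a unital ring. Both directions reduce to checking its two conditions: that every nilpotent matrix squares to zero, and that $\Annl(y^{2})=\Annl(y)$ and $\Annr(y^{2})=\Annr(y)$ for every non-nilpotent zero divisor $y$.

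For the direction ``only if'', assume $n\geq 3$. The matrix $x=e_{12}+e_{23}$ satisfies $x^{2}=e_{13}\neq 0$ and $x^{3}=0$, so $n_{x}=3$. Corollary \ref{cor:ring-neq} then gives $\ezdg(M_{n}(F))\neq\zdg(M_{n}(F))$. This mirrors the $T_{3}(\mathbb{Z}_{2})$ computation of Example \ref{ex:t3-counts}.

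For the direction ``if'', let $n=2$.

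\textbf{Condition 1.} A nilpotent $2\times 2$ matrix has characteristic polynomial $t^{2}$. By Cayley--Hamilton, $x^{2}=0$ for every $x\in\Nil(M_{2}(F))$.

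\textbf{Condition 2.} Take $y\in Z(M_{2}(F))\setminus\Nil(M_{2}(F))$ with $y\neq 0$. A zero divisor in $M_{2}(F)$ is singular, so $\det y=0$ and $\operatorname{rank} y=1$. Cayley--Hamilton now reads $y^{2}=\operatorname{tr}(y)\,y$. Because $y$ is not nilpotent, $\operatorname{tr}(y)\neq 0$. Therefore $y^{2}$ is a nonzero scalar multiple of $y$, and so
$$\Annl(y^{2})=\Annl(y) \quad\text{and}\quad \Annr(y^{2})=\Annr(y).$$
Theorem \ref{thm:ring-eq} then yields $\ezdg(M_{2}(F))=\zdg(M_{2}(F))$.

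The only step needing care is the classification of the nonzero zero divisors of $M_{2}(F)$ into two types: rank-one nilpotents, which square to zero, and rank-one matrices with nonzero trace, which are scalar multiples of idempotents. Both facts come directly from Cayley--Hamilton, so no genuine obstacle is expected. We should also confirm that the reverse implication of Theorem \ref{thm:ring-eq} applies as stated, which it does since $M_{2}(F)$ is unital.
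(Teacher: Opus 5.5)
Your proposal is correct and matches the paper's proof: both directions go through Theorem~\ref{thm:ring-eq} and Corollary~\ref{cor:ring-neq}, and for $n=2$ both use Cayley--Hamilton in the form $A^{2}=\operatorname{tr}(A)A$. The only difference is cosmetic: for $n\geq 3$ you take $e_{12}+e_{23}$, of index exactly $3$, where the paper takes the full shift $e_{12}+\cdots+e_{n-1,n}$, of index $n$; Corollary~\ref{cor:ring-neq} handles either choice.
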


\begin{proof}
	$(\Leftarrow):$ Let $n=2$ and $A\in Z(M_{2}(F))^{*}$. Thus, $\det A=0$, and the Cayley--Hamilton theorem yields
	$$
	A^{2}=\operatorname{tr}(A)A
	$$
	If $A\in\Nil(M_{2}(F))$, then both eigenvalues of $A$ are zero, so $\operatorname{tr}(A)=0$ and $A^{2}=0$. Hence, condition 1 of Theorem \ref{thm:ring-eq} holds. If $A\in Z(M_{2}(F))\setminus\Nil(M_{2}(F))$, then $c=\operatorname{tr}(A)\neq 0$, since $c=0$ would yield $A^{2}=0$, thus $A^{2}=cA$ and
	$$
	SA^{2}=0 \Longleftrightarrow c(SA)=0 \Longleftrightarrow SA=0
	\text{ and }
	A^{2}S=0 \Longleftrightarrow AS=0
	$$
	for all $S\in M_{2}(F)$. As a result, $\Annl(A^{2})=\Annl(A)$ and $\Annr(A^{2})=\Annr(A)$, and condition 2 of Theorem \ref{thm:ring-eq} holds. Consequently, $\ezdg(M_{2}(F))=\zdg(M_{2}(F))$.
	
	$(\Rightarrow):$ For $n\geq 3$, the matrix $N=e_{12}+e_{23}+\cdots+e_{n-1,n}$ satisfies $N^{n-1}=e_{1n}\neq 0$ and $N^{n}=0$. Therefore $N$ is nilpotent of index $n\geq 3$, and Corollary \ref{cor:ring-neq} yields $\ezdg(M_{n}(F))\neq\zdg(M_{n}(F))$.
\end{proof}

\begin{corollary}\label{cor:matrix-diam}
	\begin{enumerate}
		\item $\ezdg(M_{n}(F))$ is connected, and $$\Diam(\ezdg(M_{n}(F)))=\Girth(\ezdg(M_{n}(F)))=2$$
		\item $\adg(M_{n}(F))$ is connected, and
		$$\Diam(\adg(M_{n}(F)))=\Girth(\adg(M_{n}(F)))=2$$
	\end{enumerate}	
\end{corollary}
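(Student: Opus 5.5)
We need to prove, for $n\geq 2$, that $\ezdg(M_{n}(F))$ and $\adg(M_{n}(F))$ are connected, each with diameter $2$ and girth $2$. The plan is to establish all four parameters first for $\zdg(M_{n}(F))$ and then transfer them upward through the spanning-subdigraph relations.

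\textbf{Step 1: diameter of $\zdg$.} By Lemma \ref{lem2}, $\Diam(\zdg(M_{n}(F)))=2$, which in particular makes $\zdg(M_{n}(F))$ connected.

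\textbf{Step 2: the two inclusions.} Proposition \ref{propsub} shows that $\zdg(M_{n}(F))$ is a spanning subdigraph of $\ezdg(M_{n}(F))$. Since $M_{n}(F)$ is unital, we have $Z(M_{n}(F))\neq M_{n}(F)$ (the identity is not a zero divisor). Corollary \ref{cor:monoid-adg} then shows that $\zdg(M_{n}(F))$ is also a spanning subdigraph of $\adg(M_{n}(F))$. Adding arcs cannot increase directed distances, so both $\ezdg$ and $\adg$ are connected with diameter at most $2$.

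\textbf{Step 3: diameter lower bound.} To get equality, I will exhibit two vertices with no arc between them in the relevant direction in either supergraph. A natural candidate is $e_{11}$ and $e_{11}+e_{12}$, both idempotent.
\begin{itemize}
\item In $\ezdg$: all powers of an idempotent equal the element itself. Hence $e_{11}\to e_{11}+e_{12}$ is an arc of $\ezdg$ iff $e_{11}(e_{11}+e_{12})=e_{11}+e_{12}=0$, which is false. So there is no arc $e_{11}\to e_{11}+e_{12}$ in $\ezdg$.
\item In $\adg$: write $x=e_{11}$, $y=e_{11}+e_{12}$, so $xy=y$.
 \begin{itemize}
 \item Left side: $\Annl(xy)=\Annl(y)$, so $\Annl(xy)\setminus(\Annl(x)\cup\Annl(y))=\emptyset$ automatically.
 \item Right side: $\Annr(y)$ consists of the matrices $s$ with first row of $s$ plus second row of $s$ equal to zero. $\Annr(x)$ consists of the matrices with first row zero. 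We need $\Annr(y)\subseteq\Annr(x)\cup\Annr(y)$, which holds trivially since $\Annr(xy)=\Annr(y)$.
 \end{itemize}
 Hence there is no arc $x\to y$ in $\adg$.
\end{itemize}
Since $x\neq y$ and there is no arc $x\to y$ in either digraph, $\overrightarrow{d}(x,y)\geq 2$ in both, and the diameters equal $2$. The only care needed is choosing a pair where $xy$ equals $x$ or $y$, which makes the annihilator differences empty for trivial reasons.

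\textbf{Step 4: girth.} Girth is at least $2$ since the digraphs are simple. For an upper bound, $e_{11}e_{22}=0=e_{22}e_{11}$ gives the $2$-cycle $e_{11}\to e_{22}\to e_{11}$ in $\zdg(M_{n}(F))$. Alternatively, $e_{1n}$ and $e_{12}$ give one for $n\geq2$, but $e_{11},e_{22}$ works for all $n\geq 2$. This $2$-cycle lies in $\ezdg$ and $\adg$ by the inclusions in Step 2, so both girths equal $2$.

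I expect no real obstacle here. The only non-formal step is the lower bound on the diameter in Step 3, which is handled by the idempotent pair chosen there.
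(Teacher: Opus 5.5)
Your proposal is correct and follows the paper's proof essentially step for step: Lemma~\ref{lem2}, Proposition~\ref{propsub} and Corollary~\ref{cor:monoid-adg} give connectivity and $\Diam\leq 2$; the idempotent pair $e_{11}$, $e_{11}+e_{12}$ with $xy=y$ rules out the arc $x\to y$ in both $\ezdg$ and $\adg$; and the $2$-cycle $e_{11}\to e_{22}\to e_{11}$ gives girth $2$. One small slip in your aside: the alternative pair $e_{1n},e_{12}$ gives a $2$-cycle only for $n\geq 3$, since the two matrices coincide when $n=2$, but your main argument does not depend on it.
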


\begin{proof}
	By Lemma \ref{lem2}, $\zdg(M_{n}(F))$ is connected with diameter $2$. Then $\ezdg(M_{n}(F))$ and $\adg(M_{n}(F))$ are connected with diameter at most $2$, since $\zdg(M_{n}(F))$ is a spanning subdigraph of $\ezdg(M_{n}(F))$ and $\adg(M_{n}(F))$ by Proposition \ref{propsub}, and Corollary \ref{cor:monoid-adg}, respectively. \\
	Let $A=e_{11}$, $B=\begin{bmatrix}
		1 & 1 & 0 & \dots &0 \\
		0 & 0 & 0 & \dots &0 \\
		0 & 0 & 0 & \dots &0\\
		\vdots & \vdots & \vdots & \dots &\vdots\\
		0 & 0 & 0 & \dots &0
	\end{bmatrix}\in M_{n}(F)$. Then $A$ and $B$ are singular, nonzero, and idempotent. Thus, $A^{i}B^{j}=AB=B\neq 0$, for all $i,j\in\ZP$, namely $A\to B$ is not an arc in $\ezdg(M_{n}(F))$. Hence, $\Diam(\ezdg(M_{n}(F)))=2$. Moreover, $e_{11}e_{22}=0=e_{22}e_{11}$ yields the directed cycle $e_{11}\to e_{22}\to e_{11}$. Hence,
	$$
	\Girth(\zdg(M_{n}(F)))=\Girth(\ezdg(M_{n}(F)))=2.
	$$
	Moreover, $\Annl(AB)\setminus \Annl(A)\cup \Annl(B)=\emptyset$ and $\Annr(AB)\setminus \Annr(A)\cup \Annr(B)=\emptyset$. Hence, $A\to B$ is not an arc in $\adg(M_{n}(F))$, and the diameter equals $2$. Finally, the cycle $e_{11}\to e_{22}\to e_{11}$ of $\zdg(M_{n}(F))$ lies in $\adg(M_{n}(F))$. Thus, the girth equals $2$.
\end{proof}

\begin{example}\label{rem:matrix-adg}
	By Theorem \ref{thm:matrix}, since $n=2$, $\ezdg(M_{2}(\mathbb{Z}_{2}))=\zdg(M_{2}(\mathbb{Z}_{2}))$.
	
	Let $x=\begin{bmatrix}
		\overline{0}&\overline{0}\\
		\overline{1}&\overline{1}
	\end{bmatrix}, y=\begin{bmatrix}
		\overline{1}&\overline{1}\\
		\overline{0}&\overline{0}
	\end{bmatrix}$. Since $xy=x\neq [\overline{0}]$, $x \to y$ is not an arc of $\zdg(M_{2}(\mathbb{Z}_{2}))$. Therefore, the $\Diam(\zdg(M_{2}(\mathbb{Z}_{2})))=2=\Diam(\ezdg(M_{2}(\mathbb{Z}_{2})))$. Moreover, since $Z(M_{2}(\mathbb{Z}_{2}))\neq M_{2}(\mathbb{Z}_{2})$, by Theorem \ref{thm:zdg-in-adg}, $\zdg(M_{2}(\mathbb{Z}_{2}))$ is a spanning subdigraph of $\adg(M_{2}(\mathbb{Z}_{2}))$. Here, for vertices $e_{12}$ and $e_{22}$, since $e_{12}e_{22}=e_{12}$, $e_{12}\to e_{22}$ is not an arc of $\adg(M_{2}(\mathbb{Z}_{2}))$. Thus, $\Diam(\adg(M_{2}(\mathbb{Z}_{2})))=2$. Furthermore, since $e_{11}e_{22}=[\overline{0}]=e_{22}e_{11}$, then $$\Girth(\adg(M_{2}(\mathbb{Z}_{2})))=\Girth(\zdg(M_{2}(\mathbb{Z}_{2})))=\Girth(\ezdg(M_{2}(\mathbb{Z}_{2})))=2$$
\end{example}

\begin{corollary}\label{cor:ring-2abs}
	Let $R$ be a noncommutative unital ring such that $abc=0$ implies $ab=0$, $bc=0$, or $ac=0$, for all $a,b,c\in R$. Then
	$$
	\zdg(R)=\ezdg(R)=\adg(R)
	$$
\end{corollary}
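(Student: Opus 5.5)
This is a direct specialization of Corollary \ref{cor:2abs-equal} to rings. The plan is to check its two hypotheses for the multiplicative semigroup $(R,\cdot)$, which is a semigroup with zero. The condition ``$abc=0$ implies $ab=0$, $bc=0$, or $ac=0$'' is assumed verbatim for all $a,b,c\in R$. So the only thing left to verify is $Z(R)\neq R$.

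\textbf{Checking $Z(R)\neq R$.} Since $R$ is unital, consider the identity $1$.
\begin{itemize}
\item If $1\neq 0$, then $1\cdot y=0$ forces $y=0$, and likewise $y\cdot 1=0$ forces $y=0$. Hence $1$ is neither a left nor a right zero divisor, so $1\notin Z(R)$.
\item If $1=0$, then $R=\{0\}$ is the zero ring, which is commutative. This contradicts the assumption that $R$ is noncommutative.
\end{itemize}
So $1\neq 0$ and $1\in R\setminus Z(R)$. Equivalently, $(R,\cdot)$ is a monoid with zero, which is the situation of Corollary \ref{cor:monoid-adg}.

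\textbf{Applying the semigroup result.} Corollary \ref{cor:2abs-equal} now applies and gives $\zdg(R)=\ezdg(R)=\adg(R)$. Spelled out:
\begin{itemize}
\item Theorem \ref{thm:2abs}(ii) gives $\ezdg(R)=\zdg(R)$.
\item Theorem \ref{thm:2abs}(i) gives that $\adg(R)$ is a subdigraph of $\zdg(R)$.
\item Theorem \ref{thm:zdg-in-adg} gives the reverse inclusion $\zdg(R)\subseteq\adg(R)$, and all three digraphs share the vertex set $Z(R)^{*}$.
\end{itemize}

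\textbf{Where care is needed.} There is no real obstacle. The one point to watch is that $Z(R)\neq R$ uses $1\neq 0$, and this is exactly where the noncommutativity hypothesis enters. Otherwise that hypothesis only serves to exclude the trivial ring; the argument itself never uses noncommutativity.
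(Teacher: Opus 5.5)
Your proposal is correct and matches the paper's proof, which likewise notes that $Z(R)\neq R$ because $R$ is unital and then applies Corollary~\ref{cor:2abs-equal}. Your check that $1\neq 0$ does no harm, but noncommutativity is not needed at that step: under the paper's definition, even the zero ring has $Z(R)=\emptyset\neq R$.
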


\begin{proof}
	Since $R$ is unital, $Z(R)\neq R$, and the proof is clear by Corollary \ref{cor:2abs-equal}.
\end{proof}

\begin{theorem}\label{thm:ring-conn}
	Let $R$ be an artinian noncommutative ring. Then the following statements are equivalent.
	\begin{enumerate}
		\item $\ezdg(R)$ is connected.
		\item $\zdg(R)$ is connected.
		\item Every one-sided identity element of $R$ is a two-sided identity of $R$.
	\end{enumerate}
\end{theorem}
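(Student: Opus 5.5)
The plan is to chain together two results already available in the paper. The equivalence of 2 and 3 is exactly Wu's result, Theorem \ref{theo24}, applied to the artinian ring $R$. So all that remains is to link 1 and 2. For that I will use Theorem \ref{thm:ge-conn}, which says that when $|Z(R)^{*}|\geq 2$, $\zdg(R)$ is connected if and only if $\ezdg(R)$ is connected. The multiplicative semigroup of $R$ is a semigroup with zero, so that theorem applies verbatim.

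The key step is to check the hypothesis $|Z(R)^{*}|\geq 2$ for a noncommutative artinian ring, or else to deal separately with the case where it fails.

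If $Z(R)^{*}=\emptyset$, then $R$ has no nonzero zero divisors. An artinian domain is a division ring. I will prove this as follows: for $a\neq 0$, the chain $aR\supseteq a^{2}R\supseteq\cdots$ stabilizes, so $a^{k}=a^{k+1}r$, and cancellation gives $1=ar$ in the unital case. In that case both digraphs have empty vertex set and are connected vacuously, and every one-sided identity is two-sided. So all three statements hold trivially.

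If $|Z(R)^{*}|=1$, say $Z(R)^{*}=\{x\}$, then $x^{2}=0$ is forced, and the finite-ring style argument shows that $R$ is tiny. Indeed, for any $r$ with $rx\neq 0$ we get $rx=x$, and I expect this to force $R\cong\mathbb{Z}_4$ or $\mathbb{Z}_2[t]/(t^2)$. Both of these are commutative, which contradicts the hypothesis. If this classification is awkward without finiteness, I will instead verify directly that each of the three conditions holds: a single vertex gives a connected digraph by the convention on distinct vertices.

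Once $|Z(R)^{*}|\geq 2$ is secured, or the degenerate cases are disposed of, the chain $1\Leftrightarrow 2$ (Theorem \ref{thm:ge-conn}) and $2\Leftrightarrow 3$ (Theorem \ref{theo24}) completes the argument.

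The main obstacle is the degenerate small-vertex-set case. Here I would lean on the fact that noncommutativity rules out $|Z(R)^{*}|=1$. I would also note that Theorem \ref{theo24} is quoted without a size restriction, so the degenerate cases should only need a remark rather than a real computation.
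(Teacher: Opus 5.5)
Your proof is correct, but it splits the equivalences differently from the paper.

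\textbf{Your route.} You get $1\Leftrightarrow 2$ from Theorem~\ref{thm:ge-conn}. That is a purely semigroup-theoretic fact, valid whenever $|Z(S)^{*}|\geq 2$: both digraphs are connected exactly when every zero divisor is two-sided. You then use Wu's Theorem~\ref{theo24} in both directions to get $2\Leftrightarrow 3$.

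\textbf{The paper's route.} The paper proves $1\Rightarrow 3$ directly. A left identity $e$ that is not a right identity gives $x=ae-a\neq 0$ with $xe=0$. So $e$ is a sink of $\zdg(R)$, hence of $\ezdg(R)$ by Theorem~\ref{thm:sinks}, and $e$ cannot reach $x$. The paper then cites Wu only for $3\Rightarrow 2$, and gets $2\Rightarrow 1$ from the spanning-subdigraph inclusion.

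\textbf{Comparison.} Your argument is shorter and treats $1\Leftrightarrow 2$ as independent of any ring structure. The paper's argument exhibits an explicit obstruction to connectivity, and its $1\Rightarrow 3$ step works in any ring, so it needs only one direction of Wu's result.

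\textbf{The small cases.} Your analysis of $|Z(R)^{*}|\leq 1$ is unnecessary. In that case both digraphs have at most one vertex and no arcs, so they coincide and $1\Leftrightarrow 2$ is immediate. Also $2\Leftrightarrow 3$ is Wu's theorem with no size restriction.

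The details you give there are slightly off, though harmlessly so:
\begin{itemize}
\item $R$ is not assumed unital. In a unital ring, statement 3 is automatic, since a left identity satisfies $e=e\cdot 1=1$. So your division-ring argument ``in the unital case'' does not cover the general situation. The conclusion still holds without a unit: if $Z(R)^{*}=\emptyset$ and $e$ is a left identity, then $(se-s)b=0$ for all $b$, which forces $se=s$.
\item Rings with exactly one nonzero zero divisor also include the two-element null ring, not only $\mathbb{Z}_4$ and $\mathbb{Z}_2[t]/(t^2)$.
\end{itemize}
All such rings are commutative, however: they have at most four elements and are additively generated by elements that commute. So your claim that noncommutativity rules out $|Z(R)^{*}|=1$ is true.
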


\begin{proof}
	1 $\Rightarrow$ 3: Suppose that a one-sided identity element is not a two-sided identity. Assume that $R$ has a left identity $e$. Then there is an $a\in R$ with $x=ae-a\neq 0$, and $xb=a(eb)-ab=0$, for all $b\in R$. In particular, $xe=0$, that is $e,x\in Z(R)^{*}$, and $x\neq e$ since $x=e$ would yield $e=e^{2}=xe=0$. Every vertex $z\neq e$ satisfies $ez=z\neq 0$. Then, $e$ is a sink of $\zdg(R)$, and Theorem \ref{thm:sinks}(i) yields that $e$ is a sink of $\ezdg(R)$. Since $e$ is a sink of $\ezdg(R)$, $\ezdg(R)$ is not connected. This is a contradiction.	
	
	3 $\Rightarrow$ 2: This is the Theorem \ref{theo24}.
	
	2 $\Rightarrow$ 1: This is the trivial direction of Theorem \ref{thm:ge-conn}: a digraph containing a connected spanning subdigraph is connected. 
\end{proof}
\begin{proposition}\label{prop:identity}
	Let $S$ be a semigroup with zero and $e\in Z(S)^{*}$ be a left (right) identity element of $S$. Then $e$ has out (in)-degree zero in each of $\zdg(S)$, $\ezdg(S)$, and $\adg(S)$. 
\end{proposition}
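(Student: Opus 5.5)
I will prove the statement for a left identity $e\in Z(S)^{*}$ and obtain the right-identity case from Proposition \ref{prop:duality}. A right identity of $S$ is a left identity of $S^{\mathrm{op}}$, and the three digraphs of $S^{\mathrm{op}}$ are the converses of those of $S$. Out-degree zero in $S^{\mathrm{op}}$ therefore becomes in-degree zero in $S$. The strategy is to show, digraph by digraph, that no arc $e\to y$ with $y\neq e$ can exist.

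For $\zdg(S)$: an arc $e\to y$ means $ey=0$. But $ey=y$ because $e$ is a left identity, so $y=0$, which is not a vertex. For $\ezdg(S)$: an arc $e\to y$ gives $e^{n}y^{m}=0$ for some $n,m$ with $y^{m}\neq 0$. Since $e^{2}=e$, we have $e^{n}=e$, so $e^{n}y^{m}=ey^{m}=y^{m}\neq 0$, a contradiction. I will argue directly rather than through Theorem \ref{thm:sinks}, because $e$ might be isolated rather than a sink, and the direct computation covers both situations.

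For $\adg(S)$, suppose $e\to y$ is an arc. Since $ey=y$, the product $xy$ in the definition is just $y$, so the first condition asks for an element of $\Annl(y)\setminus(\Annl(e)\cup\Annl(y))$. This set is trivially empty. The second condition asks for $z\in\Annr(y)\setminus(\Annr(e)\cup\Annr(y))$, which is also trivially empty. Hence neither condition holds, and there is no arc. This is the only step with any content, and the key point is the substitution $ey=y$.

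The main point of care is the dualization. I need to check that if $e$ is a right identity of $S$ (so $xe=x$ for all $x$), then $e$ is a left identity of $S^{\mathrm{op}}$, since $e\circ x=xe=x$. I also need $Z(S^{\mathrm{op}})^{*}=Z(S)^{*}$, which holds by Proposition \ref{prop:duality}. With these two facts, the in-degree statements follow immediately from the out-degree statements already proved for $S^{\mathrm{op}}$.
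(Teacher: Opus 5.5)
Your proof is correct and matches the paper's argument. Both use the substitutions $ey=y$ and $e^{n}y^{m}=y^{m}$ to rule out every arc $e\to y$ in $\zdg(S)$, $\ezdg(S)$ and $\adg(S)$. Your derivation of the right-identity (in-degree) case from Proposition \ref{prop:duality} is a valid way to supply what the paper leaves implicit.
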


\begin{proof}
	Let $y\in Z(S)^{*}\setminus\{e\}$. Then $ey=y\neq 0$, implying $e\to y$ is not an arc in $\zdg(S)$. The products $e^{a}y^{b}=y^{b}$ are nonzero whenever $y^{b}\neq 0$, for all $b\in \mathbb{Z}^{+}$. Thus, $e\to y$ is not an arc in $\ezdg(S)$. For $\adg(S)$, since $ey=y$, $\Annl(ey)\setminus \Annl(e)\cup \Annl(y)=\emptyset$ and $\Annr(ey)\setminus \Annr(e)\cup \Annr(y)=\emptyset$. 
\end{proof}

\begin{corollary}\label{cor:identity-disc}
	Let $R$ be a noncommutative ring with a one-sided identity element. Then none of $\zdg(R)$, $\ezdg(R)$, and $\adg(R)$ is connected. In particular, if $\adg(R)$ is connected, then every one-sided identity of $R$ is a two-sided identity of $R$.
\end{corollary}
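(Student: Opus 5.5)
The plan is to reduce everything to Proposition \ref{prop:identity}: a vertex with out-degree zero (or in-degree zero) blocks directed paths. Let $e$ be a one-sided identity of the noncommutative ring $R$. By symmetry I treat $e$ as a left identity; the right case follows from Proposition \ref{prop:duality}, since passing to $R^{\mathrm{op}}$ turns a right identity into a left identity and reverses all arcs, and connectedness is preserved under taking the converse.

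Step one is to show that $e$ is a vertex and that there is another vertex. Here I want $e$ to be a left identity that is not a two-sided identity, which is the situation relevant to the ``in particular'' clause. Choose $a\in R$ with $ae\neq a$ and set $x=ae-a$. Then
\[
xb=a(eb)-ab=0 \quad\text{for all } b\in R .
\]
In particular $xe=0$, so $e,x\in Z(R)^{*}$. Also $x\neq e$, since otherwise $e=e^{2}=xe=0$. This is exactly the computation in the proof of Theorem \ref{thm:ring-conn}, now without the artinian hypothesis.

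Step two applies Proposition \ref{prop:identity}. It says $e$ has out-degree zero in each of $\zdg(R)$, $\ezdg(R)$ and $\adg(R)$. Hence there is no directed path from $e$ to the distinct vertex $x$, so none of the three digraphs is connected. The ``in particular'' statement is then the contrapositive. If $\adg(R)$ is connected, no one-sided identity can fail to be two-sided, since such an element would produce the pair $e,x$ above and hence a vertex of out-degree (or in-degree) zero.

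The main obstacle is the case where the one-sided identity is actually the two-sided identity. Then $e$ is not a zero divisor at all, and the first sentence cannot hold in that generality, because for instance $\zdg(M_{n}(F))$ is connected by Lemma \ref{lem2}. I would therefore read the hypothesis as asserting a one-sided identity that is not two-sided, state this reading explicitly, and note that it matches the contrapositive formulation in the second sentence. With this reading, the only substantive work is producing the second vertex $x$, which the construction above supplies.
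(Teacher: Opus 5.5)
Your proposal is correct and follows the paper's own proof almost verbatim: you construct $x=ae-a$ exactly as in Theorem \ref{thm:ring-conn}, then use Proposition \ref{prop:identity} to see that $e$ has out-degree zero in all three digraphs, so no directed path runs from $e$ to $x$. Your explicit restriction to a one-sided identity that is not two-sided is what the paper's proof tacitly assumes when it chooses $a$ with $ae\neq a$, and your $M_{n}(F)$ example rightly shows that the literal first sentence of the statement is false without that reading.
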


\begin{proof}
	Assume that $R$ has a left identity element $e$. As in the proof of Theorem \ref{thm:ring-conn}, there is an $a\in R$ with $x=ae-a\neq 0$ and $xb=0$, for all $b\in R$. This implies that $e,x\in Z(R)^{*}$ and $x\neq e$. All three digraphs have the vertex set $Z(R)^{*}$, and Proposition \ref{prop:identity} yields that $e$ has out-degree zero in each of them. Therefore, no directed walk joins $e$ to the further vertex $x$, and none of the three digraphs is connected.
\end{proof}
%%%%%%%%%%%%%%%%%%%%%%%%%%%%%%%%%%%%%%%%%%%%%%%%%%%%%%%%%%%%%%%%%%%%%%%%%%%%%%%%
\section{Conclusion}\label{sec:conc}

This paper investigated the generalizations of the zero-divisor graph developed in \cite{am20} for digraphs of semigroups and rings, in light of the extended zero-divisor digraph introduced in \cite{syc}. The annihilator digraph $\adg(S)$ was introduced for a semigroup $S$ with zero, and any two distinct nonzero nilpotent elements were demonstrated to be joined by directed paths of length at most $2$ in $\zdg(S)$, which yields $\Diam(\zdg(S))\leq 2$ when $Z(S)=\Nil(S)$ and sharpens the bound of \cite{w07} in this case. The end vertices of $\ezdg(S)$ were identified with those of $\zdg(S)$. Combined with the results of Wu \cite{w05}, this yields a characterization of the connectedness of $\ezdg(R)$ for Artinian noncommutative rings in terms of the existence of one-sided identity elements. Moreover, one-sided identity elements were shown to be end vertices of all three digraphs, extending the disconnectedness result to $\adg(R)$. The equalities $\ezdg(M_{n}(F))=\zdg(M_{n}(F))$ holds if and only if $n=2$, over any field $F$. Furthermore, $\adg(M_{n}(F))$ and $\ezdg(M_{n}(F))$ shown to be connected with diameter and girth $2$. Subsequently, in the ring $T_{3}(\mathbb{Z}_{2})$, it also shown that $\zdg(T_{3}(\mathbb{Z}_{2}))$ is a spanning subdigraph of $\adg(T_{3}(\mathbb{Z}_{2}))$, but $\ezdg(T_{3}(\mathbb{Z}_{2}))$ is not a subdigraph of $\adg(T_{3}(\mathbb{Z}_{2}))$. The connectedness, diameter, girth, and vertex degrees of $\adg(S)$ were bounded when every zero divisor element is nilpotent or two-sided zero divisor. It was also demonstrated that the graph $\zdg(S)$ is connected if and only if $\ezdg(S)$ is connected under the condition $|(Z(S))^{*}|\geq 2$. Moreover, $\zdg(S)$ contains a directed cycle if and only if $\zdg(S)$ contains a directed cycle. The necessary conditions for the equality of the digraphs $\zdg(S)$, $\ezdg(S)$ and $\adg(S)$ have also been established. For unital rings, the equality $\ezdg(R)=\zdg(R)$ was characterized using nilpotent elements. 

Future studies can characterize the semigroups and rings satisfying $\adg(S)=\zdg(S)$ or $\ezdg(S)$ is a subdigraph of $\adg(S)$, and determine whether two distinct nonzero nilpotent elements of a noncommutative ring are necessarily adjacent in $\adg(R)$. In particular, it remains open whether $\adg(M_{n}(F))=\zdg(M_{n}(F))$ holds, as well as how the arcs of $\adg(M_{n}(F))$ can be characterized in terms of matrix ranks. Whether the sinks and the sources of $\zdg(S)$ remain end vertices of $\adg(S)$ when $Z(S)\neq S$, in analogy with Theorem \ref{thm:sinks}, is also open. The same is true of the item (i) left open by Theorem \ref{thm:ge-cycle}.

%\backmatter
\subsubsection*{Author Contributions}

All authors contributed equally.

\subsubsection*{Financial Disclosure}

None reported.

\subsubsection*{Conflicts of Interest}

The authors declare no conflicts of interest.

\bibliographystyle{plain}
\bibliography{refs}

\end{document}